 \newtheorem{thm}{Theorem}[section]
 \newtheorem{lem}[thm]{Lemma}
 \newtheorem{prop}[thm]{Proposition}
 \theoremstyle{remark}
 \newtheorem{rem}[thm]{Remark}
\def\pt{\partial}
\def\bff{\mathbf{f}}
\def\im{\mathrm{i}}
\def\half{\frac{1}{2}}
\def\R{\mathbb{R}}
\def\dtdtau{\frac{\Delta t}{\tau}}
\def\sign{\mathrm{sign}}
\def\erf{\mathrm{erf}}
\def\bfU{\mathbf{U}}
\def\bfu{\mathbf{u}}
\def\bfA{\mathbf{A}}
\def\bfb{\mathbf{b}}
\def\bfl{\mathbf{l}}
\def\bfL{\mathbf{L}}
\def\bffR{{\mathbf{\mathcal{R}}}}
\def\bfx{\mathbf{x}}
\def\bfy{\mathbf{y}}
\begin{document}

\begin{frontmatter}



\title{Stability of the first-order unified gas-kinetic scheme based on a linear kinetic model}

\author[HKUSTm]{Tuowei Chen\corref{cor1}}

\ead{tuowei_chen@163.com}

\author[HKUSTm,HKUSTmae,HKUSTsz]{Kun Xu}

\ead{makxu@ust.hk}

\cortext[cor1]{Corresponding Author}

\address[HKUSTm]{Department of Mathematics, Hong Kong University of Science and Technology, HongKong, China }

\address[HKUSTmae]{Department of Mechanical and Aerospace Engineering, Hong Kong University of Science and Technology, Hong Kong, China}

\address[HKUSTsz]{HKUST Shenzhen Research Institute, Shenzhen 518057, China}

\begin{abstract}
The unified gas-kinetic scheme (UGKS) is becoming increasingly popular for multiscale simulations in all flow regimes. This paper provides the first analytical study on the stability of the UGKS applied to a linear kinetic model, which is able to reproduce the one-dimensional linear scalar advection-diffusion equation via the Chapman-Enskog expansion method. Adopting periodic boundary conditions and neglecting the error from numerical integration, this paper rigorously proves the weighted $L^2$-stability of the first-order UGKS under the Courant–Friedrichs–Lewy (CFL) conditions. It is shown that the time step of the method is not constrained by being less than the particle collision time, nor is it limited by parabolic type CFL conditions typically applied in solving diffusion equations. The novelty of the proof lies in that based on the ratio of the time step to the particle collision time, the update of distribution functions is viewed as a convex combinations of sub-methods related to various physics processes, such as the particle free transport and collisions. The weighted $L^2$-stability of the sub-methods is obtained by considering them as discretizations to corresponding linear hyperbolic systems and utilizing the associated Riemann invariants. Finally, the strong stability preserving property of the UGKS leads to the desired weighted $L^2$-stability.
\end{abstract}



\begin{keyword}
BGK model, unified gas-kinetic scheme, strong stability



\end{keyword}

\end{frontmatter}

\section{Introduction}
Gas dynamics can be modeled with a variable scale in different flow regimes. For the continuum flow, the Navier-Stokes (NS) equations are the well-known macroscopic governing equations for fluid dynamics, while in the rarefied regime, the Boltzmann equation is the fundamental equation for describing nonequilibrium flows. To solve the Boltzmann equation, the direct simulation Monte Carlo (DSMC) \cite{Bird1994} is a leading stochastic method for high-speed rarefied flow. In the framework of deterministic approximation, the most popular class of methods is the discrete velocity method (DVM) \cite{Mieussens2000}, which directly discretizes both the spatial domain and the particle velocity space. Both DSMC and conventional DVM are based on the operator splitting treatment for the particle transport and collision. This requires the cell size and time step to be less than the particle mean free path and collision time in the explicit numerical evolution process, making these methods prohibitive in the continuum flow application. For continuum flow computations, the gas-kinetic scheme (GKS) \cite{Xu2001} is a robust and accurate hydrodynamic flow solver for the NS solutions. The GKS uses the integral solution of the Bhatnagar-Gross-Krook (BGK) kinetic model \cite{BGK1954} along cell interfaces to obtain numerical fluxes. Since the GKS updates the macroscopic flow variables only and uses the Chapman–Enskog expansion \cite{CE1990} for the construction of a gas distribution function, it is only valid for the continuum flow. 

Numerical methods for solving the NS equations and the Boltzmann equation are typically confined to simulating gas dynamics at hydrodynamic and kinetic scales, respectively. These methods can be reliably applied within their specific scales due to the clear scale separation. However, in real science and engineering applications, this separation is often not so distinct. For instance, around a hypersonic flying vehicle, different flow physics may emerge at different regions, such as the highly non-equilibrium shock layer, low density trailing edge, and the wake turbulence, corresponding to different regimes, where the local Knudsen number can vary significantly in several orders of magnitude \cite{XL2017}. This complexity highlights the necessity for developing numerical methods that can effectively address the entire flow regime.

In recent years, with the discretized particle velocity space and based on the BGK model, a unified gas-kinetic scheme (UGKS) has been proposed for multiscale simulations in all flow regimes \cite{XH2010,HXY2012,Xu2015}. Compared with the GKS, the UGKS updates both macroscopic variables and the gas distribution function, and computes the numerical flux without using the Chapman-Enskog expansion, removing the hydrodynamic scale limitation. In comparison with the DVM, the UGKS takes into account the effect of particle collisions around cell interfaces, eliminating the restriction on the time step and cell size being less than the collision time and mean free path. Recently, the UGKS has been successfully used in simulations of radiative transfer \cite{SJX2017}, plasma \cite{LX2017} and particle flows \cite{LWX2019}. The asymptotic preserving property of the UGKS for diffusion limit of a linear transport model was analyzed in \cite{Mieussens2013}. The unified preserving property of the discrete UGKS was studied in \cite{GLX2023}. To our knowledge, no analytical investigations on the stability of this class of multiscale kinetic methods are available so far. It is worth to mention that the $L^2$-stability of the GKS applied to the linear advection-diffusion equation was established in \cite{TX2006}. 

The purpose of this paper is to present some analytical results about the numerical stability of the UGKS in a simplified setting. We are not concerned with the competitiveness of the UGKS for the full compressible gas dynamics, however, our results may provide some explanations for the robustness of the method. We apply the explicit UGKS method to a linear kinetic model. This model is of BGK type and is able to approximate the one-dimensional (1-D) linear scalar advection-diffusion equation via the Chapman-Enskog expansion. For simplicity, we consider only the first-order accurate method and adopt periodic boundary conditions. The resulting first-order UGKS allows to investigate the structure of the UGKS in a most explicit setting which will provide a thorough understanding of the multiscale kinetic mechanisms in the method. This paper provides a novel analytical framework to study the $L^2$-stability of multiscale kinetic methods. The results serve as an initial step toward a comprehensive analysis of the full gas-dynamic case.

The main contribution of this paper is highlighted as follows. First, we demonstrate that the first-order UGKS based on a linear kinetic model has a constraint-preserving property, assuming that the error from numerical integration is negligible. Second, based on the ratio of the time step to the particle collision time, we view the the first-order UGKS as a convex combination of sub-methods that address various physical processes, including particle-free transport across cell interfaces, particle collisions around cell interfaces, and particle collisions within each cell. This approach enables us to analyze the strong stability of those physics-process-related sub-methods respectively, leveraging the strong stability preserving property of the method. Third, to derive the strong stability of the sub-methods, we consider them as discretizations to linear hyperbolic systems and utilize the associated Riemann invariants to construct a weighted $L^2$ convex functional. As a result, the weighted $L^2$-stability of the first-order UGKS is rigorously proved under the Courant–Friedrichs–Lewy (CFL) conditions. The time step of the method neither suffers limitations of being less than the collision time, nor is it restricted by parabolic type CFL conditions typically used in solving diffusion equations.

The rest of this paper is organized as follows. In \Cref{sec:model}, a linear kinetic model of BGK type is introduced. In \Cref{sec:scheme}, based on the linear kinetic model, a first-order UGKS is proposed. In \Cref{sec:analysis}, the strong stability of the proposed scheme is rigorously proved. Finally, conclusions are presented in \Cref{sec:conclusion}.

\section{A linear kinetic model}\label{sec:model}
In this section, a linear kinetic model is introduced to reproduce the 1-D scalar, linear advection-diffusion equation via the Chapman-Enskog expansion method.

\subsection{Hydrodynamic equation and kinetic model equation}
The governing equations for gas dynamics are obtained based on different scale modeling. 

In the hydrodynamic scale, the 1-D scalar, linear advection-diffusion equation reads  
\begin{equation}\label{eq:lad}
	\pt_t u + a\pt_x u= \nu \pt_x^2 u,
\end{equation} 
where $u\in\R$ is the conservative variable, $a>0$ is a constant standing for the advection velocity, and $\nu\in\R$ is a positive viscosity or diffusion coefficient. This equation can be regarded as a toy model for viscous fluid flows with zero pressure.

In the kinetic mean free path scale, the evolution of the gas distribution function can be described by the BGK equation,
\begin{equation}\label{eq:bgk}
	\pt_t f+ c\pt_x f = \frac{1}{\tau}(g-f),
\end{equation}
where $f(c,x,t)$ is the gas distribution function, $c$ is microscopic particle velocity, $g$ is the equilibrium state, and $\tau$ is a positive constant standing for the mean collision time. Different choices for the equilibrium state $g$ lead to models for various physical equations, such as gas dynamics and shallow-water equations. For the full gas dynamic equation the equilibrium is given by a Maxwell distribution.

The macroscopic and microscopic quantities are related through
\begin{equation}\label{conservative-moment}
	u(x,t)=\int_{\R}f(c,x,t)dc,
\end{equation}
which links the kinetic description to the macroscopic variable $u$.

\subsection{Kinetic model for the linear advection-diffusion equation}

To reproduce the 1-D scalar, linear advection-diffusion equation \eqref{eq:lad} in the spirit of \cite{KXMJ1997,TX2006,XL2017}, the equilibrium state $g$ in \eqref{eq:bgk} is set to be
\begin{equation}\label{eq:g}
	g(c,x,t)= u(x,t)\frac{1}{\sqrt{\theta\pi}}
	e^{-\frac{(c-a)^2}{\theta}}.
\end{equation} 
In comparison with the Maxwellian distribution, $u$ corresponds to the density, $a$ to the mean field velocity, and $\theta$ to the temperature. With the equilibrium state \eqref{eq:g}, the BGK model \eqref{eq:bgk} becomes a linear kinetic model.

By the construction \eqref{eq:g}, the equilibrium distribution $g$ satisfies 
\begin{equation}
	\int_{\R}g(c,x,t)dc=u(x,t).
\end{equation} 
Therefore, integrating with respect to the particle velocity on both sides of the equation \eqref{eq:bgk} gives the macroscopic equation
\begin{equation}
	\pt_t u + \pt_x F = 0,
\end{equation}
with the macroscopic flux
\begin{equation}\label{macroscopic-flux}
	F(x,t)=\int_{\R}cf(c,x,t) dc.
\end{equation}

For smooth solutions, the Chapman-Enskog expansion method \cite{CE1990} considers an asymptotic expansion
\begin{equation}
	f^{(N)}(c,x,t)
	=\sum_{n=0}^{N}\tau^{n} f_{n}(c,x,t)
\end{equation}
of the distribution function in terms of the relaxation time $\tau$ up to an order $N$. The parameter $\tau$ is assumed to be small in an appropriate dimensionless scaling. The first expansion becomes
\begin{equation}\label{CEfirst}
	f^{(1)}(c,x,t)=(u-\tau(c-a)\pt_x u)
	\frac{1}{\sqrt{\theta\pi}}e^{-\frac{(c-a)^2}{\theta}}.
\end{equation}
The corresponding macroscopic flux is 
\begin{equation}
	F^{(1)}=au-\frac{\theta\tau}{2} \pt_x u,
\end{equation}
which leads to the advection-diffusion equation
\begin{equation}\notag
	\pt_t u + a\pt_x u = \frac{\theta\tau}{2} \pt_x^2 u.
\end{equation}  
Comparing the coefficients in the above equation with the equation \eqref{eq:lad} gives the relation 
\begin{equation}
	\nu= \frac{\theta\tau}{2}.
\end{equation}
The expansion \eqref{CEfirst} converges when $\tau$ is small, and the equation \eqref{eq:lad} may not be consistent with the equation \eqref{eq:bgk} when $\tau$ gets large.

\section{The first-order unified gas-kinetic scheme}\label{sec:scheme}
This section is devoted to the first-order UGKS based on the linear kinetic model \eqref{eq:bgk} with an equilibrium state \eqref{eq:g}. 

\subsection{Finite volume framework}
The UGKS is a direct physical modeling method for the time evolution of both the gas distribution function and macroscopic flow variables in a discretized space. In one-dimensional space, the spatial space $\Omega$ is divided into uniform control volumes, i.e., $\Omega_{i}=[x_{i-\half},x_{i+\half}]$ with a cell center $x_i$ and cell size $\Delta x=x_{i+\half}-x_{i-\half}$ for $i=1,\dots,I$. The temporal discretization is denoted by $t^n$ for the $n$-th time step. The particle velocity space is discretized by $2K + 1$ Cartesian mesh points with a uniform velocity spacing $\Delta c$, and the center of the $k$-th particle velocity interval is $c_k = a+k\Delta c$ for $k=-K,\dots,K$.

The time evolution of a gas distribution function in the computational space is due to the particle transport through cell interfaces and the particle collisions inside each cell. For a gas distribution function $f$ in a control volume around space $x_{i}$, time $t^{n}$, and particle velocity $c_k$, the direct modeling method gives
\begin{equation}\label{eq:physics-distribution}
	\begin{aligned}
		f^{n+1}_{k,i}=&f^{n}_{k,i}
		-\frac{1}{\Delta x}\int_{t^n}^{t^{n+1}}
		\big(c_{k}f_{k,i+\half}(t)-c_{k}f_{k,i-\half}(t)\big)dt\\
		&+\int_{t^n}^{t^{n+1}}\int_{\Omega_{i}} Q(f)dxdt,
	\end{aligned}
\end{equation}
where $f^{n}_{k,i}$ and $f^{n+1}_{k,i}$ are the cell average of $f(x,t,c_k)$ over the cell $\Omega_{i}$ at time $t=t^{n}$ and $t^{n+1}$, respectively. Moreover, $f_{k,i+\half}(t)$ is a time-dependent distribution function on the cell interface $x=x_{i+\half}$ at particle velocity $c_k$, and $Q(f)$ is the time-dependent particle collision term inside each cell, which redistributes the particle in the velocity space. For the BGK collision model \eqref{eq:bgk}, $Q(f)=(g-f)/\tau$.

The time evolution of macroscopic conservative variables is due to the conservation of conservative variables during particle collisions. The update of the conservative moment given by \eqref{conservative-moment} becomes
\begin{equation}\label{eq:physics-conservation}
	u^{n+1}_{i}=u^n_{i}
	-\frac{1}{\Delta x}
	\int_{t^n}^{t^{n+1}}\big(F_{i+\half}(t)-F_{i-\half}(t)\big)dt,
\end{equation}
where $u^{n}_{i}$ and $u^{n+1}_{i}$ are the cell average of the conservative variable $u$ over the cell $\Omega_{i}$ at time $t=t^{n}$ and $t^{n+1}$, respectively. In addition, $F_{i+\half}(t)$ is the time-dependent macroscopic flux determined by the first moment of the distribution function at the cell interface $x=x_{i+\half}$. Note that both governing equations \eqref{eq:physics-distribution} and \eqref{eq:physics-conservation} are the basic physical laws in the mesh size and time step scales, and there is no inaccuracy introduced yet.

In order to evolve the above two governing equations in the finite volume framework, it remains to determine the numerical fluxes at cell interfaces and the numerical source for particle collision terms by using a direct modeling method. The modeling of the numerical flux and source are based on the linear kinetic model \eqref{eq:bgk} with an equilibrium state \eqref{eq:g}, and the details are presented in the following subsections.

\subsection{Numerical flux}
The key ingredient of UGKS is the construction of the time-dependent discrete distribution functions $f_{k,i+\half}(t)$ at the cell interface. It is obtained by using the integral representation of the solution of the BGK equation \eqref{eq:bgk} that
\begin{equation} \label{integral-solution}
	\begin{aligned}
		f_{k,i+\half}(t)=f(x_{i+\half},t,c_k)
		=&\frac{1}{\tau}\int_{t^n}^{t}g(x-c_k(t-s),s,c_k)e^{-(t-s)/\tau}ds\\
		&+e^{-(t-t^{n})/\tau}f(x_{i+\half}-c_k(t-t^{n}),t^n,c_k).
	\end{aligned}
\end{equation}
Here, since we consider only first-order accurate methods in the present paper, the initial distribution function around the cell interface are constructed as
\begin{equation}
	f(x,t^n,c_k)=
	\begin{cases}
		f^n_{k,i},\quad\quad  &x<x_{i+\half},\\
		f^n_{k,i+1},\quad\quad &x\geq x_{i+\half}.
	\end{cases}	
\end{equation}
For a first-order accurate method, the equilibrium distribution function $g$ around $(x_{i+\half},t^n)$ is assumed to be constant in space and time, yielding 
\begin{equation}
	g(x,t,c_k)=g^n_{k,i+\half},
\end{equation}
where $g^n_{k,i+\half}$ is the equilibrium state on the cell interface $x=x_{i+\half}$, at particle velocity $c_k$. 

In the above equation, the construction of $g^n_{k,i+\half}$ depends on the modeling of particle collisions around the cell interface. For the linear kinetic model \eqref{eq:bgk} with an equilibrium state \eqref{eq:g}, $g^n_{k,i+\half}$ takes the form
\begin{equation}\label{g-form}
	g^n_{k,i+\half}=u^{g,n}_{i+\half}\frac{1}{\sqrt{\theta\pi}}e^{-\frac{(c_k-a)^2}{\theta}}.
\end{equation}
The value of $u^{g,n}_{i+\half}$ is determined using a particle velocity-weighted method,
\begin{equation}\label{ug-interface}
	u^{g,n}_{i+\half}=\sum_{k}\Delta c\frac{c_k}{\sqrt{a^2+\frac{1}{2}\theta}} f^{g,n}_{k,i+\half},
\end{equation}
where
\begin{equation}
	f^{g,n}_{k,i+\half} = 
	\frac{1}{2}(f^n_{k,i}+f^n_{k,i+1})
	-\erf(\frac{a}{\sqrt{\theta}})(f^n_{k,i+1}-f^n_{k,i}).
\end{equation}
In the above equations, $\erf(\cdot)$ is the error function given by
\begin{equation}\notag
	\erf(x)=\int_{0}^{x}e^{-t^2}dt.
\end{equation}

Combining the formulae \eqref{integral-solution}--\eqref{ug-interface} leads to the expression of numerical fluxes for updating the discretized distribution function at particle velocity $c_k$,
\begin{equation}\label{numerical-flux:f}
	\begin{aligned}
		f^{n,*}_{k,i+\half}=&\frac{1}{\Delta t}\int_{t^n}^{t^{n+1}}f_{k,i+\half}(t)dt\\
		=&\big(1-\frac{\tau}{\Delta t}(1-e^{-\frac{\Delta t}{\tau}})\big)
		g^n_{k,i+\half}+\frac{\tau}{\Delta t}(1-e^{-\frac{\Delta t}{\tau}})f^n_{k,i+\half},
	\end{aligned}
\end{equation}
where $\Delta t=t^{n+1}-t^{n}$, and 
\begin{equation}
	f^n_{k,i+\half}=\frac{1}{2}(f^n_{k,i}+f^n_{k,i+1})
	-\frac{1}{2}\sign(c_k)(f^n_{k,i+1}-f^n_{k,i}).
\end{equation}

For updating the conservative variable $u^{n+1}_{i}$ over the cell $\Omega_{i}$, the macroscopic numerical flux at the cell interface $x=x_{i+\half}$ is given by
\begin{equation}\label{numerical-flux:F}
	\begin{aligned}
		F^{n,*}_{i+\half}=\frac{1}{\Delta t}\int_{t^n}^{t^{n+1}}F_{i+\half}(t)dt
		=&\frac{1}{\Delta t}\int_{t^n}^{t^{n+1}}\big(\sum_{k}c_kf_{k,i+\half}(t)\Delta c\big)dt\\
		=&\sum_{k}c_kf^{n,*}_{k,i+\half}\Delta c.
	\end{aligned}
\end{equation}

\begin{rem}
	The construction of $u^{g,n}_{i+\half}$ in \eqref{ug-interface} is to ensure the $l^2$ stability for the update of the macroscopic momentum, i.e., the first moment $\sum_{k}\Delta c c_k f^{n}_{k,i}$, in the hydrodynamic limit $\tau\rightarrow0$. It can be verified that in the hydrodynamic limit $\tau\rightarrow0$, the update of the macroscopic momentum $\sum_{k}\Delta cc_k f^{n}_{k,i}$ is reduced to
	\begin{equation}
		F^{n+1}_{i}=F^{n}_{i}
		-\tilde{a}\frac{\Delta t}{\Delta x}
		\bigg(F^{g,n}_{i+\half}-F^{g,n}_{i-\half}\bigg),
	\end{equation}
	where 
	\begin{equation}\label{def:F-cell-interface}
		\begin{aligned}
			&F^{n+1}_{i}=\sum_{k}\Delta c c_kf_{k,i}^{n+1},\quad 
			F^n_{i}=\sum_{k}\Delta c c_kf_{k,i}^{n},\\
			&F^{g,n}_{i+\half}=\frac{1}{2}(F^n_{i}+F^n_{i+1})
			-\erf(\frac{a}{\sqrt{\theta}})(F^n_{i+1}-F^n_{i}),\\
			&\tilde{a}=\frac{1}{\sqrt{a^2+\frac{1}{2}\theta}}\sum_{k} \Delta c {c_k}^2\frac{1}
			{\sqrt{\theta\pi}}e^{-\frac{(c_k-a)^2}{\theta}}.
		\end{aligned}
	\end{equation}
	In the above equations, $F^{g,n}_{i+\half}$ is of the GKS type cell interface flux \cite{Xu2001,TX2006} for the first moment,  which is recovered by the construction of $u^{g,n}_{i+\half}$ in \eqref{ug-interface}. The $l^2$ stability of the finite volume scheme with the GKS type numerical flux was proved in \cite{TX2006}, which will play a key role in obtaining the strong stability of the current scheme in all flow regimes, see \Cref{subsect-interface-g} below. 
	
\end{rem}

\subsection{Numerical source}
It remains to approximate the integral of the collision term in \eqref{eq:physics-distribution}. Based on the linear kinetic model \eqref{eq:bgk} with an equilibrium \eqref{eq:g}, the source term $Q(f)=(g-f)/\tau$. In the near continuum flow regime, the time step size $\Delta t$ can be much larger than the particle mean collision time $\tau$. In order to overcome the difficulty due to the stiff collision effect in the near continuum flow regime, the explicit UGKS will update the macroscopic variables first. The update of the conservative variable $u$ inside the control volume $\Omega_{i}$ is 
\begin{equation}\label{FV:u}
	u^{n+1}_{i}=u^n_{i}
	-\frac{\Delta t}{\Delta x}\big(F^{n,*}_{i+\half}-F^{n,*}_{i-\half}\big),
\end{equation} 
where the macroscopic numerical flux $F^{n,*}_{i+\half}$ is given in \eqref{numerical-flux:F}.

Based on the above updated conservative variable, the discrete equilibrium distribution function inside each cell at the next time level is predicted as
\begin{equation}\label{numerical-source:g}
	g^{n+1}_{k,i}=u^{n+1}_{i}\frac{1}{\sqrt{\theta\pi}}e^{-\frac{(c_k-a)^2}{\theta}}.
\end{equation}

Then, to achieve first-order accuracy and overcome the stiffness, the backward Euler method is applied to compute the numerical source in \eqref{eq:physics-distribution}, leading to 
\begin{equation}\label{FV:f-ori}
	\begin{aligned}
		f^{n+1}_{k,i}=f^{n}_{k,i}
		-\frac{c_{k}\Delta t}{\Delta x}
		\big( f^{n,*}_{k,i+\half}- f^{n,*}_{k,i-\half}\big)
		+\frac{\Delta t}{\tau}\big(g^{n+1}_{k,i}-f^{n+1}_{k,i}\big),
	\end{aligned}
\end{equation}
where the numerical flux for the distribution function at particle velocity $c_k$ is given in \eqref{numerical-flux:f}. The equation \eqref{FV:f-ori} indeed gives an explicit finite volume scheme for the discrete distribution function at particle velocity $c_k$, 
\begin{equation}\label{FV:f}
	f^{n+1}_{k,i}=\frac{1}{1+\frac{\Delta t}{\tau}}\bigg[f^n_{k,i}
	-\frac{c_{k}\Delta t}{\Delta x}
	\big( f^{n,*}_{k,i+\half}-f^{n,*}_{k,i-\half}\big)
	\bigg]+\frac{\frac{\Delta t}{\tau}}{1+\frac{\Delta t}{\tau}}g^{n+1}_{k,i}.
\end{equation}

\subsection{Summary of the numerical scheme}
Finally, the first-order explicit UGKS developed so far is summarized as \Cref{alg:scheme}.

\begin{algorithm}[!htbp]
	\caption{First-order UGKS} \label{alg:scheme}
	\begin{algorithmic}[1]
		\State At each cell interface, compute the numerical flux for discrete distribution functions and the conservative variable by \eqref{numerical-flux:f} and \eqref{numerical-flux:F}.
    	\State In each cell $\Omega_i$, update the conservative variable $u^{n+1}_{i}$ by \eqref{FV:u}. Compute the equilibrium distribution function $g^{n+1}_{k,i}$ by \eqref{numerical-source:g} for all $k$.
		\State Update the discrete distribution functions $f^{n+1}_{k,i}$ by \eqref{FV:f} for all $k$.
	\end{algorithmic}
\end{algorithm}

\section{Stability analysis} \label{sec:analysis}
This section is devoted to the weighted $L^2$ stability analysis of the current first-order UGKS.

\subsection{Main theorems}
This subsection is devoted to the main theorems of the present paper.

For simplicity, in the present paper the boundary conditions are assumed to be periodic and they are implemented using ghost cells,
\begin{equation}\label{boundary-periodic}
	u^{n}_{0}=u^{n}_{I},\quad u^{n}_{I+1}=u^{n}_{1},\quad
	f^{n}_{k,0}=f^{n}_{k,I},\quad f^{n}_{k,I+1}=f^{n}_{k,1}.
\end{equation}
The initial data are assumed to satisfy the conservation constraint
\begin{equation}\label{assumption-ini-compatibility}
	u^{0}_{i}=\sum_{k}f^{0}_{k,i}\Delta c.
\end{equation}
Here and in the sequel, the following notations are adopted,
\begin{equation}\notag
	W(\frac{\Delta t}{\tau})=\frac{\tau}{\Delta t}(1-e^{-\frac{\Delta t}{\tau}}),\quad
	\omega_k=\frac{1}{\sqrt{\theta\pi}}e^{-\frac{(c_k-a)^2}{\theta}}.
\end{equation}
Note that in practical computations, the particle velocity space is typically well discretized with a large $K$ to ensure that the numerical integration achieves the desired accuracy. Therefore, in this paper, we neglect the error associated with numerical integration and assume that it is accurate in the following sense,
\begin{equation}\label{assumption-num-integration}
	\sum_{k}\Delta c\omega_k=1,\quad \sum_{k}\Delta cc_k\omega_k=a,\quad
	\sum_{k} \Delta c {c_k}^2\omega_k=a^2+\frac{1}{2}\theta.
\end{equation}

Under the condition that the numerical integration is accurate, it is observed the first-order UGKS has the constraint-preserving property.
\begin{prop}
	Suppose that the condition \eqref{assumption-num-integration} holds. Then, the first-order UGKS has the constraint-preserving property, i.e.,
	\begin{equation}\label{constraint-preserving}
		u^{n+1}_{i}-\sum_{k} \Delta c f^{n+1}_{k,i}=0,  \quad\quad	i=1,\dots,I, \, n\geq 0,
	\end{equation}
	provided that
	\begin{equation}\label{conservative-n}
		u^{n}_{i}-\sum_{k}\Delta c f^{n}_{k,i}=0.
	\end{equation}	
\end{prop}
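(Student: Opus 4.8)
The plan is to argue by induction on $n$. The base case $n=0$ is exactly the compatibility assumption \eqref{assumption-ini-compatibility}, so the whole content lies in the inductive step: assuming the constraint \eqref{conservative-n} at level $n$, I want to deduce \eqref{constraint-preserving} at level $n+1$. The strategy is a direct verification rather than anything structural—I would take the discrete zeroth moment of the distribution-function update \eqref{FV:f}, that is, multiply through by $\Delta c$ and sum over the velocity index $k$, and then recognize that the result collapses onto the macroscopic update \eqref{FV:u}.

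Concretely, I would apply $\sum_{k}\Delta c(\cdot)$ to \eqref{FV:f} and treat its three pieces one at a time. The leading term $\sum_{k}\Delta c\, f^{n}_{k,i}$ becomes $u^{n}_{i}$ by the inductive hypothesis \eqref{conservative-n}. The transport term $\sum_{k}\Delta c\, c_k f^{n,*}_{k,i\pm\half}$ is, by the very definition \eqref{numerical-flux:F} of the macroscopic numerical flux, equal to $F^{n,*}_{i\pm\half}$. The relaxation term requires the zeroth moment of the predicted equilibrium: inserting the explicit form \eqref{numerical-source:g} of $g^{n+1}_{k,i}$ and invoking the integration-accuracy assumption $\sum_{k}\Delta c\,\omega_k=1$ from \eqref{assumption-num-integration}, I get $\sum_{k}\Delta c\, g^{n+1}_{k,i}=u^{n+1}_{i}$.

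The key observation is that after these substitutions the bracketed quantity in \eqref{FV:f} reads $u^{n}_{i}-\frac{\Delta t}{\Delta x}\bigl(F^{n,*}_{i+\half}-F^{n,*}_{i-\half}\bigr)$, which is precisely the right-hand side of the macroscopic update \eqref{FV:u} and hence equals $u^{n+1}_{i}$. Therefore $\sum_{k}\Delta c\, f^{n+1}_{k,i}$ reduces to $\frac{1}{1+\Delta t/\tau}\,u^{n+1}_{i}+\frac{\Delta t/\tau}{1+\Delta t/\tau}\,u^{n+1}_{i}=u^{n+1}_{i}$, since the two coefficients sum to one; this is exactly \eqref{constraint-preserving}, closing the induction.

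Because \eqref{FV:f} is linear and every identity I need is supplied by the hypotheses, I do not expect a real obstacle here—this is a consistency (design) property of the scheme rather than a deep estimate. The only point demanding care is the bookkeeping of which assumption plays which role: the inductive hypothesis \eqref{conservative-n} handles the free-transport part, while the numerical-integration accuracy $\sum_{k}\Delta c\,\omega_k=1$ handles the collision part. The fact that the two convex weights add to one is exactly the backward-Euler structure that lets the free-transport and equilibrium contributions recombine into $u^{n+1}_{i}$, so the argument terminates the moment the bracket is identified with \eqref{FV:u}.
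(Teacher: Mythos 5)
Your proposal is correct and takes essentially the same route as the paper: both proofs amount to taking the discrete zeroth moment $\sum_k \Delta c(\cdot)$ of the distribution-function update and combining \eqref{numerical-flux:F}, \eqref{numerical-source:g}, the integration-accuracy condition $\sum_k \Delta c\,\omega_k = 1$ from \eqref{assumption-num-integration}, and the macroscopic update \eqref{FV:u}. The only cosmetic difference is that you work with the explicit form \eqref{FV:f} and identify the bracket with $u^{n+1}_i$ (which forces you to invoke the hypothesis \eqref{conservative-n} at that point), whereas the paper works with the backward-Euler form \eqref{FV:f-ori} and solves the resulting linear relation for the defect $u^{n+1}_i - \sum_k \Delta c\, f^{n+1}_{k,i}$, obtaining the slightly stronger contraction statement that the defect is damped by the factor $\frac{1}{1+\Delta t/\tau}$ even when it is nonzero.
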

\begin{proof}
	Compute $u^{n+1}_{i}-\sum_{k} \Delta c f^{n+1}_{k,i}$ by using the equations \eqref{FV:u}, \eqref{numerical-flux:F} and \eqref{FV:f}, leading to
	\begin{equation}\notag
		\begin{aligned}
			u^{n+1}_{i}-\sum_{k} \Delta c f^{n+1}_{k,i}
			=&u^{n}_{i}-\sum_{k} \Delta c f^{n}_{k,i}-\frac{\Delta t}{\tau}\sum_{k}\Delta c
			\big(u^{n+1}_{i}\omega_k- f^{n+1}_{k,i}\big)\\
			=&u^{n}_{i}-\sum_{k} \Delta c f^{n}_{k,i}-\frac{\Delta t}{\tau}
			\big(u^{n+1}_{i}-\sum_{k} \Delta c f^{n+1}_{k,i}\big),
		\end{aligned}
	\end{equation}
	where the condition \eqref{assumption-num-integration} has been used. This implies 
	\begin{equation}\notag
		u^{n+1}_{i}-\sum_{k} \Delta c f^{n+1}_{k,i}
		=\frac{1}{1+\dtdtau}\big(u^{n}_{i}-\sum_{k} \Delta c f^{n}_{k,i}\big)=0,
	\end{equation}
	provided that \eqref{conservative-n} holds.
	
	The proof is completed.
\end{proof}

Adopt the notations
\begin{equation}\notag
	\bfU^{n}=\big(\frac{\bff^{n}_{-K}}{\sqrt{\omega_{-K}}},\dots,
	\frac{\bff^{n}_{K}}{\sqrt{\omega_{K}}}\big),\quad
	\bff^{n}_{k}=(f^{n}_{k,1},\dots,f^{n}_{k,I})^\top,\quad k=-K,\dots,K.
\end{equation}
Define the norms
\begin{equation}\label{def:norm}
	\|\bfU^n\|_{L^2}=\bigg(\sum_{k=-K}^{K}\Delta c
	\big\|\frac{\bff^{n}_{k}}{\sqrt{\omega_k}}\big\|^2_{l^2}\bigg)^{\half},\quad
	\|\bff^{n}_{k}\|_{l^2}=\big(\sum_{i=1}^{I}\Delta x
	|f^{n}_{k,i}|^2\big)^{\half}.
\end{equation}
The above $L^2$-norm of $\bfU^n$ is indeed a weighted $L^2$-norm of the discrete distribution functions. Then, the main theorem for the weighted $L^2$ stability of the first-order UGKS is stated as follows.
\begin{thm}
	Suppose that the conditions \eqref{assumption-num-integration} and \eqref{assumption-ini-compatibility} hold. Suppose that the periodic boundary conditions \eqref{boundary-periodic} holds.
	Then, if the time step $\Delta t$ satisfies the CFL condition
	\begin{equation}\label{CFL}
		\max\bigg\{\max_{-2K\leq k\leq 2K}|c_k|,
		\sqrt{a^2+\frac{1}{2}\theta}\big/\erf\big(\frac{a}{\sqrt{\theta}}\big) \bigg \}
		\frac{\Delta t}{\Delta x}\leq 1,
	\end{equation}	
	then the numerical solution of the first-order UGKS satisfies
	\begin{equation}\label{ineq-main-f}
		\|\bfU^{n+1}\|_{L^2}\leq \|\bfU^n\|_{L^2}.
	\end{equation}
	In addition, denote $\bfu^{n}=(u^n_{1},\dots,u^n_{I})$. The conservative variable $\bfu$ satisfies
	\begin{equation}\label{ineq-main-u}
		\|\bfu^{n+1}\|_{l^2}
		\leq \|\bfU^{n+1}\|_{L^2}
		\leq \|\bfU^n\|_{L^2}.
	\end{equation}
\end{thm}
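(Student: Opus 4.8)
The plan is to exploit the algebraic structure of the update \eqref{FV:f}: I would write the new distribution as a convex combination of three elementary states, one per physical process, bound each in the weighted norm, and then invoke convexity. Substituting the flux splitting $f^{n,*}_{k,i+\half}=(1-W)g^{n}_{k,i+\half}+Wf^{n}_{k,i+\half}$ of \eqref{numerical-flux:f} into \eqref{FV:f}, with $W=W(\Delta t/\tau)\in(0,1)$ and $\lambda=\frac{\dtdtau}{1+\dtdtau}\in(0,1)$, gives
\begin{equation}\notag
	f^{n+1}_{k,i}=(1-\lambda)W\,T_{k,i}+(1-\lambda)(1-W)\,G_{k,i}+\lambda\,g^{n+1}_{k,i},
\end{equation}
where $T_{k,i}=f^{n}_{k,i}-\frac{c_k\Delta t}{\Delta x}(f^{n}_{k,i+\half}-f^{n}_{k,i-\half})$ is the upwind free-transport state, $G_{k,i}=f^{n}_{k,i}-\frac{c_k\Delta t}{\Delta x}(g^{n}_{k,i+\half}-g^{n}_{k,i-\half})$ is the interface-collision state built from the GKS-type equilibrium flux $g^{n}_{k,i\pm\half}=u^{g,n}_{i\pm\half}\omega_k$, and $g^{n+1}_{k,i}=u^{n+1}_i\omega_k$ is the in-cell equilibrium. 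The three coefficients are nonnegative and sum to one, so by the triangle inequality and convexity of $\|\cdot\|_{L^2}$ (the strong-stability-preserving step), \eqref{ineq-main-f} follows once I show that the weighted norms $\|T\|_{L^2},\|G\|_{L^2},\|g^{n+1}\|_{L^2}$ (formed exactly as $\|\bfU^n\|_{L^2}$ from the arrays $(T_{k,i}),(G_{k,i}),(g^{n+1}_{k,i})$) are each at most $\|\bfU^{n}\|_{L^2}$.

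Two of the states are straightforward. For each fixed $k$, $T$ is precisely the first-order upwind scheme for $\pt_t f+c_k\pt_x f=0$, whose $l^2$-contractivity under $|c_k|\Delta t/\Delta x\le1$ is classical; since the weight $1/\omega_k$ is constant in $i$, the first branch of \eqref{CFL} (which enforces $|c_k|\Delta t/\Delta x\le1$ for every discrete velocity) yields $\|T\|_{L^2}\le\|\bfU^n\|_{L^2}$. For the in-cell state, summing $\hat f_{k,i}:=W\,T_{k,i}+(1-W)\,G_{k,i}$ against $\Delta c$ in $k$ and using the constraint-preserving property together with \eqref{numerical-flux:F} and \eqref{FV:u} gives the key identity $u^{n+1}_i=\sum_k\Delta c\,\hat f_{k,i}$; a Cauchy--Schwarz estimate with weights $\omega_k$ (using $\sum_k\Delta c\,\omega_k=1$) then gives $\|g^{n+1}\|_{L^2}=\|\bfu^{n+1}\|_{l^2}\le\|\hat{\bfU}\|_{L^2}\le W\|T\|_{L^2}+(1-W)\|G\|_{L^2}$, where $\hat{\bfU}$ is the weighted array of $\hat f$. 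The same Cauchy--Schwarz inequality, applied instead to the constraint $u^{n+1}_i=\sum_k\Delta c\,f^{n+1}_{k,i}$, produces the first inequality $\|\bfu^{n+1}\|_{l^2}\le\|\bfU^{n+1}\|_{L^2}$ of \eqref{ineq-main-u}. Hence both the in-cell bound and \eqref{ineq-main-u} reduce to the transport estimates.

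The crux is the interface-collision state $G$, where the equilibrium flux couples all discrete velocities through the velocity-weighted reconstruction \eqref{ug-interface}. Here I would expand the weighted energy $\|G\|_{L^2}^2$ directly: using \eqref{assumption-num-integration} to evaluate $\sum_k\Delta c\,c_k^2\omega_k=a^2+\half\theta$ and recognizing $\sum_k\Delta c\,c_kf^n_{k,i}=F^n_i$, the cross- and square-terms collapse so that the energy increment $\|G\|_{L^2}^2-\|\bfU^n\|_{L^2}^2$ depends only on the first-moment field $F^n_i$ and its GKS reconstruction $u^{g,n}_{i\pm\half}$. This increment is exactly the energy produced by the scalar GKS momentum scheme of the Remark, whose characteristic speed is $\sqrt{a^2+\half\theta}$. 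Realizing that scheme as a discretization of its underlying linear hyperbolic (relaxation) system and building a convex functional from the associated Riemann invariants then yields $\|G\|_{L^2}\le\|\bfU^n\|_{L^2}$ under the second branch of \eqref{CFL}, recovering the $l^2$-estimate of \cite{TX2006}. I expect this interface-collision estimate to be the main obstacle: unlike free transport it is not a diagonal upwind operator, the reconstruction coefficient $\erf(a/\sqrt\theta)$ enters the admissible CFL constant nontrivially, and it is in controlling this velocity-coupled quadratic form that the error-function threshold and the broadened velocity range in \eqref{CFL} are required. Once all three states are bounded, the convex-combination argument delivers \eqref{ineq-main-f}, and the Cauchy--Schwarz inequality above completes \eqref{ineq-main-u}.
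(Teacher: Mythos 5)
Your proposal is correct and follows the same architecture as the paper's proof: the same splitting of \eqref{FV:f} into free-transport, interface-collision, and in-cell-collision states, the same strong-stability-preserving convexity step (\Cref{lem:convex}), the same per-velocity upwind/von Neumann bound for the transport state (\Cref{lem:f}), and the same constraint-preservation-plus-Cauchy--Schwarz reduction both for the in-cell state (\Cref{lem:s}) and for the inequality \eqref{ineq-main-u}. The one place you genuinely deviate is the interface-collision state $G$. The paper regards that sub-method as a discretization of the system $\pt_t\bfU=\bfA\pt_x\bfU$ with the rank-one matrix $\bfA=\frac{1}{\sqrt{a^2+\theta/2}}\bfb\bfb^\top$, constructs the orthogonal matrix $\bfL$ of left eigenvectors, and shows all Riemann invariants except the first moment are frozen, so the weighted energy change reduces to that of the scalar GKS momentum scheme \eqref{scheme-GKS}. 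You instead expand $\|G\|^2_{L^2}$ directly and use \eqref{assumption-num-integration} to collapse the cross and quadratic terms, obtaining the identity that the energy increment equals $\frac{1}{a^2+\theta/2}\sum_i\Delta x\big(|F^{g,n+1}_i|^2-|F^{n}_i|^2\big)$ --- the same reduction, reached without building the eigenbasis or invoking orthogonality. Both routes then rest on the von Neumann estimate of \cite{TX2006} under the second branch of \eqref{CFL}; yours is more elementary, the paper's makes the frozen-invariant structure explicit (your own reference to ``Riemann invariants of a relaxation system'' for the scalar momentum scheme is superfluous, since von Neumann analysis applies to it directly). One further point in your favor: your coefficients $(1-\lambda)W$, $(1-\lambda)(1-W)$, $\lambda$ sum to one, whereas the decomposition displayed in \eqref{FV:convex} carries a typo (the transport coefficient should be $W/(1+\Delta t/\tau)$, not $W$), under which the stated identity fails and the convexity inequality in \Cref{lem:convex} would not close; your version is the one that makes the SSP step valid.
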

\begin{proof}
	The inequality \eqref{ineq-main-f} follows from \Cref{lem:convex}, \Cref{lem:f}, \Cref{lem:g} and \Cref{lem:s} below.	
	
	Based on the conditions \eqref{assumption-num-integration} and \eqref{assumption-ini-compatibility}, the constraint-preserving property \eqref{constraint-preserving} leads to 
	\begin{equation}\label{ineq-temp-u}
		\begin{aligned}
			\sum_{i=1}^{I}|u^{n+1}_{i}|^2
			=&\sum_{i=1}^{I}\big|\sum_{k=-K}^{K}
			\Delta cf^{n+1}_{k,i}\big|^2
			=\sum_{i=1}^{I}\bigg|\sum_{k=-K}^{K}\Delta c\sqrt{\omega_k} \frac{f^{n+1}_{k,i}}{\sqrt{\omega_k}}\bigg|^2\\
			\leq& \sum_{i=1}^{I}\bigg[
			\big(\sum_{k=-K}^{K}\Delta c\omega_k\big)^{\half}
			\big(\Delta c
			\big\|\frac{\bff^{n+1}_{k}}{\sqrt{\omega_k}}\big\|^2_{l^2}\big)^{\half}
			\bigg]^2\\
			=& \|\bfU^{n+1}\|^2_{L^2}.
		\end{aligned}
	\end{equation}
	In the second line of the above inequality, the Cauchy-Schwartz inequality has been used. Therefore, \eqref{ineq-main-u} follows from \eqref{ineq-temp-u} and \eqref{ineq-main-f}.
	
	The proof is completed.
\end{proof}

\subsection{Convex combination of physics-process-related sub-methods}


A key ingredient of the numerical stability analysis is to view the current scheme \eqref{FV:f} as a convex combination of physics-process-related sub-methods. The updates of discrete distribution functions \eqref{FV:f} and the conservative variable \eqref{FV:u} are rewritten in the form
\begin{equation}\label{FV:convex}
	\begin{aligned}
		&f^{n+1}_{k,i}=W(\dtdtau)f^{f,n+1}_{k,i}
		+\frac{1}{1+\dtdtau}(1-W(\dtdtau))f^{g,n+1}_{k,i}
		+\frac{\dtdtau}{1+\dtdtau}f^{s,n+1}_{k,i},\\
		&u^{n+1}_{i}=W(\dtdtau)u^{f,n+1}_{i}
		+(1-W(\dtdtau))u^{g,n+1}_{i}, 
	\end{aligned}
\end{equation} 
where
\begin{equation}\label{FV:convex-f}
	\begin{aligned}
		&f^{f,n+1}_{k,i}=f^{n}_{k,i}
		-\frac{c_k\Delta t}{\Delta x}(f^{n}_{k,i+\half}-f^{n}_{k,i-\half}),\\
		&f^{g,n+1}_{k,i}=f^{n}_{k,i}-\frac{c_k\Delta t}{\Delta x}(g^{n}_{k,i+\half}-g^{n}_{k,i-\half}),\\
		&f^{s,n+1}_{k,i}=g^{n+1}_{k,i}=u^{n+1}_{i}\omega_k,
	\end{aligned}
\end{equation}
and
\begin{equation}\label{FV:convex-u}
	\begin{aligned}
		u^{f,n+1}_{i}=u^n_{i}
		-\frac{\Delta t}{\Delta x}\sum_{k}(c_kf^{n}_{k,i+\half}-c_kf^{n}_{k,i-\half}),\\
		u^{g,n+1}_{i}=u^n_{i}
		-\frac{\Delta t}{\Delta x}\sum_{k}(c_kg^{n}_{k,i+\half}-c_kg^{n}_{k,i-\half}).		
	\end{aligned}
\end{equation}
In the above equations, the update of $f^{f,n+1}_{k,i}$, $f^{g,n+1}_{k,i}$ and $f^{s,n+1}_{k,i}$ correspond to the kinetic physics processes of the particle free transport at cell interfaces, the particle collisions around cell interfaces, and particle collisions inside each cell, respectively. Similarly, the update of $u^{f,n+1}_{i}$ and $u^{g,n+1}_{i}$ correspond to the macroscopic physics processes of the particle free transport and particle collisions at cell interfaces. Due to the fact that $0\leq W(\dtdtau)\leq 1$, all the coefficients in the expression \eqref{FV:convex} are belong to the interval $[0,1]$. Thus, it is seen from \eqref{FV:convex} that the first-order UGKS is indeed a convex combination of sub-methods related to various physics processes.

Adopt the notation
\begin{equation}\label{notation-U-alpha}
	\bfU^{\alpha,n+1}
	=\big(\frac{\bff^{\alpha,n+1}_{-K}}{\sqrt{\omega_{-K}}},\dots,
	\frac{\bff^{\alpha,n+1}_{K}}{\sqrt{\omega_{K}}}\big)^\top,\quad
	\quad \alpha=f,g,s.
\end{equation}
where $\bff^{\alpha,n+1}_{k}=(f^{\alpha,n+1}_{k,1},\dots,f^{\alpha,n+1}_{k,I})$.

The expression \eqref{FV:convex} indicates the strong-stability preserving property of the first-order UGKS .

\begin{lem}\label{lem:convex}
	If the method given by \eqref{FV:convex}--\eqref{FV:convex-u} satisfies 
	\begin{equation}\label{condition-convex}
		\|\bfU^{\alpha,n+1}\|_{L^2}\leq \|\bfU^{n}\|_{L^2},\quad
		\quad \alpha=f,g,s,
	\end{equation}
\end{lem}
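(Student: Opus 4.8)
The plan is to read \Cref{lem:convex} as the abstract strong-stability-preserving step: once the full update has been written in \eqref{FV:convex} as a convex combination of the three sub-method outputs, the desired conclusion $\|\bfU^{n+1}\|_{L^2}\le\|\bfU^n\|_{L^2}$ follows purely from the triangle inequality and the absolute homogeneity of the weighted $L^2$-norm, combined with the hypothesis \eqref{condition-convex}. No property of the individual sub-methods is needed here beyond \eqref{condition-convex} itself; the lemma is entirely a statement about how a norm interacts with a convex combination.

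First I would promote the scalar identity \eqref{FV:convex} to a vector identity. Writing $\lambda_f,\lambda_g,\lambda_s$ for the three coefficients appearing there (the factors multiplying $f^{f,n+1}_{k,i}$, $f^{g,n+1}_{k,i}$ and $f^{s,n+1}_{k,i}$ respectively), the decisive observation is that each $\lambda_\alpha$ depends only on the global ratio $\dtdtau$ and is therefore independent of both the velocity index $k$ and the cell index $i$. Consequently, dividing \eqref{FV:convex} by $\sqrt{\omega_k}$, stacking the entries over $i=1,\dots,I$ and over $k=-K,\dots,K$, and comparing with \eqref{notation-U-alpha}, I obtain the exact vector identity
\begin{equation}\notag
	\bfU^{n+1}=\lambda_f\bfU^{f,n+1}+\lambda_g\bfU^{g,n+1}+\lambda_s\bfU^{s,n+1}.
\end{equation}
As recorded in the text following \eqref{FV:convex-u}, the bound $0\le W(\dtdtau)\le 1$ guarantees $\lambda_\alpha\in[0,1]$, and these weights sum to one, so the right-hand side is a genuine convex combination.

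Second, I would apply the triangle inequality and the homogeneity of $\|\cdot\|_{L^2}$ to this identity, obtaining $\|\bfU^{n+1}\|_{L^2}\le\lambda_f\|\bfU^{f,n+1}\|_{L^2}+\lambda_g\|\bfU^{g,n+1}\|_{L^2}+\lambda_s\|\bfU^{s,n+1}\|_{L^2}$. Invoking the hypothesis \eqref{condition-convex} to replace each $\|\bfU^{\alpha,n+1}\|_{L^2}$ by $\|\bfU^n\|_{L^2}$, and then using $\lambda_f+\lambda_g+\lambda_s=1$, yields $\|\bfU^{n+1}\|_{L^2}\le\|\bfU^n\|_{L^2}$, which is the conclusion of the lemma.

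The one step deserving genuine care—the main obstacle—is establishing that \eqref{FV:convex} really is a convex combination, i.e.\ that the weights are nonnegative, sum to one, and in particular carry no $k$- or $i$-dependence. The nonnegativity and normalization reduce to the elementary estimate $0\le W(\dtdtau)\le 1$, which follows from the monotonicity of $x\mapsto(1-e^{-x})/x$ on $(0,\infty)$ and the positivity of $\dtdtau$. The $k,i$-independence is what makes the entire mechanism function: it is precisely because the same scalar weights multiply every component that they factor out of the norm in the triangle-inequality step. Had any weight depended on $k$, the velocity rescaling $f_{k,i}\mapsto f_{k,i}/\sqrt{\omega_k}$ built into the definition of $\bfU$ would fail to commute with the combination and the estimate would break down; conversely, once the $k,i$-independence is secured, every remaining manipulation is a direct application of the norm axioms.
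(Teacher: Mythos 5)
Your argument follows exactly the same route as the paper's own proof: the paper simply notes that $\|\cdot\|_{L^2}$ is a convex functional and applies the triangle inequality with the weights of \eqref{FV:convex} together with the hypothesis \eqref{condition-convex}. Your additional observations (that the weights are independent of $k$ and $i$, so they factor out of the weighted norm after the rescaling by $\sqrt{\omega_k}$) make explicit something the paper leaves implicit, and they are correct.

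There is, however, one claim you assert but do not actually verify, and it is the load-bearing one: that the three weights sum to one. You write that ``nonnegativity and normalization reduce to $0\le W(\dtdtau)\le 1$,'' but normalization is a separate algebraic identity, not a consequence of that bound. Moreover, for the coefficients as literally printed in \eqref{FV:convex}, namely $\lambda_f=W(\dtdtau)$, $\lambda_g=\frac{1}{1+\dtdtau}\big(1-W(\dtdtau)\big)$, $\lambda_s=\frac{\dtdtau}{1+\dtdtau}$, the claim is false:
\[
W(\dtdtau)+\frac{1-W(\dtdtau)}{1+\dtdtau}+\frac{\dtdtau}{1+\dtdtau}
=1+\frac{W(\dtdtau)\,\dtdtau}{1+\dtdtau}>1
\]
for $\Delta t>0$, and with weights summing to more than one the final step of the estimate does not yield $\|\bfU^{n+1}\|_{L^2}\le\|\bfU^{n}\|_{L^2}$. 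The resolution is that \eqref{FV:convex} contains a misprint, which also propagates into the paper's own proof: re-deriving the decomposition from \eqref{FV:f} and \eqref{numerical-flux:f} (writing $f^{n,*}_{k,i+\half}=\big(1-W(\dtdtau)\big)g^n_{k,i+\half}+W(\dtdtau)f^n_{k,i+\half}$ and substituting) gives
\[
f^{n+1}_{k,i}=\frac{W(\dtdtau)}{1+\dtdtau}f^{f,n+1}_{k,i}
+\frac{1-W(\dtdtau)}{1+\dtdtau}f^{g,n+1}_{k,i}
+\frac{\dtdtau}{1+\dtdtau}f^{s,n+1}_{k,i},
\]
so the correct coefficient of $f^{f,n+1}_{k,i}$ is $\frac{1}{1+\dtdtau}W(\dtdtau)$, not $W(\dtdtau)$. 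With this correction the weights sum exactly to one, nonnegativity does follow from $0\le W(\dtdtau)\le 1$, and your argument (and the paper's) closes. So your proof is structurally sound, but a watertight write-up must carry out the normalization check rather than deferring it to the bound on $W$ --- here that check is precisely what uncovers and repairs the misprint.
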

then it preserves the strong stability property
\begin{equation}\notag
	\|\bfU^{n+1}\|_{L^2}\leq \|\bfU^{n}\|_{L^2}.
\end{equation}
\begin{proof}
	Note that the norm $\|\cdot\|_{L^2}$ given by \eqref{def:norm} is a convex functional. Therefore,
	\begin{equation}\notag
		\begin{aligned}
			\|\bfU^{n+1}\|_{L^2}
			\leq &\frac{1}{1+\dtdtau}(1-W(\dtdtau))\|\bfU^{g,n+1}\|_{L^2}
			+W(\dtdtau)\|\bfU^{f,n+1}\|_{L^2}\\
			&+\frac{\dtdtau}{1+\dtdtau}\|\bfU^{s,n+1}\|_{L^2}\\
			\leq& \|\bfU^{n}\|_{L^2}.
		\end{aligned}
	\end{equation}
	The proof is completed.
\end{proof}

As a result, in order to derive the strong stability \eqref{ineq-main-f}, it suffices to show \eqref{condition-convex}, which is the topic of following subsections. 

\subsection{Particle free transport at cell interfaces}
This subsection is devoted to showing $\|\bfU^{f,n+1}\|_{L^2}\leq \|\bfU^{n}\|_{L^2}$, which is related to the physics process of the particle free transport at cell interfaces. Without loss of generality, we study the first-order finite volume scheme
\begin{equation}\label{FV:flux-f}
	f^{f,n+1}_{k,i}=f^{f,n}_{k,i}
	-\frac{c_k\Delta t}{\Delta x}(f^{n}_{k,i+\half}-f^{n}_{k,i-\half}),
\end{equation}
where
\begin{equation}\label{FV:flux-f1}
	f^{n}_{k,i+\half}=\frac{1}{2}(f^{f,n}_{k,i}+f^{f,n}_{k,i+1})
	-\frac{1}{2}\sign(c_k)(f^{f,n}_{k,i+1}-f^{f,n}_{k,i}).
\end{equation}
It suffices to show $\|\bfU^{f,n+1}\|_{L^2}\leq \|\bfU^{f,n}\|_{L^2}$, which implies $\|\bfU^{f,n+1}\|_{L^2}\leq \|\bfU^{n}\|_{L^2}$ by replacing the data $f^{f,n}_{k,i}$ with $f^{n}_{k,i}$ at the $n$-th time step.

We have the following lemma. 
\begin{lem}\label{lem:f}
	Suppose that the conditions \eqref{assumption-num-integration}, \eqref{assumption-ini-compatibility} and \eqref{boundary-periodic} hold. Then, if the CFL condition \eqref{CFL} holds, then the numerical solution of the method \eqref{FV:flux-f} satisfies
	\begin{equation}\label{L2norm-Uf}
		\|\bfU^{f,n+1}\|_{L^2}\leq \|\bfU^{f,n}\|_{L^2},
	\end{equation}
	where $\bfU^{f,n+1}$ is given in \eqref{notation-U-alpha}.
\end{lem}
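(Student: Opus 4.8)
The plan is to exploit the fact that the free-transport sub-method \eqref{FV:flux-f}--\eqref{FV:flux-f1} is completely decoupled across the discrete velocities: for each fixed index $k$ the update is nothing but the first-order upwind discretization of the scalar advection equation $\pt_t f + c_k\pt_x f = 0$. Since the weights $\Delta c/\omega_k$ appearing in $\|\cdot\|_{L^2}$ are fixed positive constants, independent of the time level $n$, the weighted inequality \eqref{L2norm-Uf} will follow by summing over $k$ a per-velocity contraction $\|\bff^{f,n+1}_{k}\|_{l^2}\le\|\bff^{f,n}_{k}\|_{l^2}$. Thus I would reduce everything to the classical $l^2$-stability of the upwind scheme for a single advection speed.

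First I would insert the upwind flux \eqref{FV:flux-f1} into \eqref{FV:flux-f} and resolve the $\sign(c_k)$ factor according to the sign of $c_k$. For $c_k>0$ the interface value collapses to the upwind (left) cell value, so the update reads $f^{f,n+1}_{k,i}=(1-\lambda_k)f^{f,n}_{k,i}+\lambda_k f^{f,n}_{k,i-1}$ with $\lambda_k=c_k\Delta t/\Delta x$; for $c_k<0$ the symmetric identity holds with the right neighbor $f^{f,n}_{k,i+1}$ and $\lambda_k=|c_k|\Delta t/\Delta x$; for $c_k=0$ the scheme is stationary. Since $\{-K,\dots,K\}\subset\{-2K,\dots,2K\}$, the CFL hypothesis \eqref{CFL} guarantees $|c_k|\Delta t/\Delta x\le 1$ for every relevant $k$, so each update is a genuine convex combination of two neighboring nodal values, i.e. $\lambda_k\in[0,1]$.

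Next I would apply the convexity of the map $x\mapsto x^2$ to bound $|f^{f,n+1}_{k,i}|^2\le(1-\lambda_k)|f^{f,n}_{k,i}|^2+\lambda_k|f^{f,n}_{k,i'}|^2$, where $i'$ denotes the relevant upwind neighbor, and then sum over $i=1,\dots,I$. Because the boundary conditions \eqref{boundary-periodic} are periodic, the shifted sum $\sum_i|f^{f,n}_{k,i'}|^2$ coincides with $\sum_i|f^{f,n}_{k,i}|^2$, so the two contributions recombine, the factor $\lambda_k$ cancels, and $\sum_i\Delta x|f^{f,n+1}_{k,i}|^2\le\sum_i\Delta x|f^{f,n}_{k,i}|^2$ follows. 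Multiplying this per-velocity estimate by the fixed positive weight $\Delta c/\omega_k$ and summing over $k=-K,\dots,K$ reproduces the definition \eqref{def:norm} on both sides and yields \eqref{L2norm-Uf}.

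I do not expect any genuine obstacle in this step: the decoupling in velocity turns the sub-method into the textbook upwind scheme, whose stability is secured by the elementary convex-combination argument above, and the assumptions \eqref{assumption-num-integration}--\eqref{assumption-ini-compatibility} play no role here. The real difficulty, postponed to \Cref{lem:g} and \Cref{lem:s}, is the velocity coupling introduced through the equilibrium states $g^{n}_{k,i\pm\half}$, where the Riemann-invariant construction of a weighted $L^2$ convex functional becomes essential and where the stricter $2K$-range in the CFL condition \eqref{CFL} is actually needed.
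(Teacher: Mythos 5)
Your proposal is correct, and it shares the paper's overall reduction: the free-transport update decouples into independent scalar first-order upwind schemes, one per discrete velocity $c_k$, and the weighted inequality \eqref{L2norm-Uf} follows by multiplying the per-velocity $l^2$ contraction by the fixed weights $\Delta c/\omega_k$ and summing over $k$. Where you genuinely differ is in how that per-velocity contraction is established. The paper invokes von Neumann analysis: in \Cref{sec:ap} it computes the amplification factor $\lambda_k(m)=1-\eta_k(1-e^{-\im m\Delta x})$ with $\eta_k=c_k\Delta t/\Delta x$ and checks $|\lambda_k(m)|^2=1-4\eta_k(1-\eta_k)\sin^2\big(\half m\Delta x\big)\le 1$ for $0\le\eta_k\le 1$. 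You instead write the upwind update as the convex combination $f^{f,n+1}_{k,i}=(1-\lambda_k)f^{f,n}_{k,i}+\lambda_k f^{f,n}_{k,i'}$ under the CFL condition, apply convexity of $x\mapsto x^2$, and let periodicity recombine the shifted sum. Both are rigorous; your route is more elementary (no Fourier expansion) and yields a discrete maximum principle as a by-product, while the paper's choice has the advantage of uniformity: the same von Neumann machinery is reused for the estimate \eqref{estimate-R1-0} in \Cref{lem:g}, where the GKS-type flux $F^{g,n}_{i+\half}=\big(\half+\erf(\tfrac{a}{\sqrt{\theta}})\big)F^{g,n}_{i}+\big(\half-\erf(\tfrac{a}{\sqrt{\theta}})\big)F^{g,n}_{i+1}$ makes the update \emph{not} a convex combination of neighboring cell values whenever $\erf(a/\sqrt{\theta})<\half$ (recall the paper's unnormalized error function), so your Jensen-type argument would not carry over there. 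Two minor remarks: your observation that \eqref{assumption-num-integration} and \eqref{assumption-ini-compatibility} play no role in this lemma is accurate (the paper's proof does not use them either); however, your closing speculation that the $2K$-range in \eqref{CFL} is what is "actually needed" in \Cref{lem:g} and \Cref{lem:s} is not borne out by the paper --- the binding constraint there is $\sqrt{a^2+\theta/2}\,\Delta t/\Delta x\le\erf(a/\sqrt{\theta})$, i.e.\ the second entry of the max in \eqref{CFL}, not the extended velocity range.
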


\begin{proof}
	For each $k$, the scheme \eqref{FV:flux-f}--\eqref{FV:flux-f1} is regarded as a first-order upwind type finite volume scheme for the linear scalar advection equation
	\begin{equation}\notag
		\pt_t f_{k}+c_k\pt_x f_{k}=0.
	\end{equation}
	Under periodic boundary conditions \eqref{boundary-periodic}, for each $k$, it can be proved using the standard von-Neumann stability analysis method \cite{MM2005} that
	\begin{equation}\label{von-Neumann-analysis-1}
		\sum_{i=1}^{I} \Delta x|f^{f,n+1}_{k,i}|^2\leq \sum_{i=1}^{I}\Delta x |f^{f,n}_{k,i}|^2,
	\end{equation}
	provided that $\frac{c_k\Delta t}{\Delta x}\leq 1$.	The details for the proof of the estimate \eqref{von-Neumann-analysis-1} are presented in \Cref{sec:ap}. The estimate \eqref{von-Neumann-analysis-1} indicates 
	\begin{equation}\notag
		\big\|\frac{\bff^{f,n+1}_{k}}{\sqrt{\omega_k}}\big\|_{l^2}\leq 	\big\|\frac{\bff^{f,n}_{k}}{\sqrt{\omega_k}}\big\|_{l^2}, 
	\end{equation}
	which leads to \eqref{L2norm-Uf}.
	
	The proof is completed.
\end{proof}

\subsection{Particle collisions around cell interfaces}
\label{subsect-interface-g}
In this subsection, we proceed to show $\|\bfU^{g,n+1}\|_{L^2}\leq \|\bfU^{n}\|_{L^2}$, which is related to the physics process of particle collisions around cell interfaces. Without loss of generality, we study the first-order finite volume scheme
\begin{equation}\label{FV:flux-g}
	f^{g,n+1}_{k,i}=f^{g,n}_{k,i}-\frac{c_k\Delta t}{\Delta x}(g^{n}_{k,i+\half}-g^{n}_{k,i-\half}),
\end{equation}
where
\begin{equation}\label{FV:flux-g1}
	\begin{aligned}			&g^n_{k,i+\half}=u^{g,n}_{i+\half}\omega_k,\quad
		u^{g,n}_{i+\half}=\sum_{k}\Delta c \frac{c_k}{\sqrt{a^2+\frac{1}{2}\theta}} f^{g,n}_{k,i+\half},\\
		&f^{g,n}_{k,i+\half} = 
		\frac{1}{2}(f^{g,n}_{k,i}+f^{g,n}_{k,i+1})
		-\erf(\frac{a}{\sqrt{\theta}})(f^{g,n}_{k,i+1}-f^{g,n}_{k,i}).
	\end{aligned}
\end{equation}
It suffices to show $\|\bfU^{g,n+1}\|_{L^2}\leq \|\bfU^{g,n}\|_{L^2}$, which implies $\|\bfU^{g,n+1}\|_{L^2}\leq \|\bfU^{n}\|_{L^2}$ by replacing the data $f^{g,n}_{k,i}$ with $f^{n}_{k,i}$ at the $n$-th time step.

Rewrite the scheme \eqref{FV:flux-g}--\eqref{FV:flux-g1} as 
\begin{equation}\label{scheme-g}
	\frac{1}{\Delta t}\bigg(\frac{f^{g,n+1}_{k,i}}{\sqrt{\omega_k}}
	-\frac{f^{g,n}_{k,i}}{\sqrt{\omega_k}}\bigg)
	=\frac{1}{\sqrt{a^2+\frac{1}{2}\theta}}
	\sum_{l}\Delta c c_kc_l \sqrt{\omega_k\omega_{l}}
	\frac{1}{\Delta x}\bigg(\frac{f^{g,n}_{l,i+\half}}{\sqrt{\omega_l}}
	-\frac{f^{g,n}_{l,i-\half}}{\sqrt{\omega_l}}\bigg).
\end{equation}
The scheme given by \eqref{scheme-g} can be regarded as a finite volume discretization to the linear hyperbolic system
\begin{equation}\notag
	\pt_t \bfU = \bfA \pt_x \bfU, 
\end{equation}
where $\bfU=\big(\frac{f_{-K}}{\sqrt{\omega_{-K}}},\dots,
\frac{f_{K}}{\sqrt{\omega_{K}}}\big)^\top$, $\bfA$ is a $(2K+1)\times (2K+1)$ constant matrix given by
\begin{equation}\notag
	A_{kl}=\frac{1}{\sqrt{a^2+\frac{1}{2}\theta}}\Delta c c_kc_l \sqrt{\omega_k\omega_{l}}.
\end{equation} 

A key observation lies in that the matrix $\bfA$ can be expressed as
\begin{equation}\label{def:A}
	\bfA = \frac{1}{\sqrt{a^2+\frac{1}{2}\theta}}\bfb \bfb^\top,
\end{equation}
where $\bfb$ is a vector belong to $\R^{2K+1}$ and is defined as
\begin{equation}
	\bfb=(c_{-K}\sqrt{\omega_{-K}\Delta c},
	\dots,c_{K}\sqrt{\omega_{K}\Delta c})^\top.
\end{equation}
Thus, $\mathrm{Rank}\bfA=1$ and $\bfA$ has only one non-zero eigenvalue, i.e., 
\begin{equation}\notag
	\lambda_1
	= \frac{1}{\sqrt{a^2+\frac{1}{2}\theta}}\bfb^\top \bfb
	=\frac{1}{\sqrt{a^2+\frac{1}{2}\theta}}\sum_{k}\Delta c{c_k}^2\omega_k.
\end{equation}
In what follows we assume that the condition \eqref{assumption-num-integration} holds. Then, a unit left eigenvectors of $\bfA$ is
\begin{equation}\label{def:l_1}
	\bfl_1=\frac{1}{\sqrt{a^2+\frac{1}{2}\theta}}\bfb,
\end{equation}
which satisfies $\bfl_1^\top\bfA=\lambda_1\bfl_1^\top$ and 
\begin{equation}\notag
	\bfl_1\cdot\bfl_1
	= \frac{1}{a^2+\frac{1}{2}\theta}\bfb \cdot \bfb 
	=\frac{1}{a^2+\frac{1}{2}\theta}
	\sum_{k}\Delta c{c_k}^2\omega_k
	=1.
\end{equation}
The other left eigenvectors of $\bfA$ are correspond to the zero eigenvalue, and they consist of the vectors in space $\bfl_1^\perp$, i.e., the complementary space of $\bfl_1$. Let $\{\bfl_2, \bfl_3, \dots, \bfl_{2K+1}\}$ be a unit orthogonal basis of the space $\bfl_1^{\perp}$, and let $\bfL$ be the matrix of left eigenvectors, i.e.,
\begin{equation}\label{def:L}
	\bfL=\begin{pmatrix}
		\bfl_1^\top\\
		\vdots\\
		\bfl_{2K+1}^\top
	\end{pmatrix}.
\end{equation}	
Then, it is observed that $\bfl_{m}^\top\bfA=0$ for $m=2,\dots,2K+1$, and
\begin{equation}\notag
	\bfl_{k}\cdot \bfl_{l}=\begin{cases}
		1,\quad\quad \mathrm{if}\, k=l,\\
		0,\quad\quad \mathrm{else}.
	\end{cases}
\end{equation} 
Note the fact that if a square matrix has unit orthogonal rows, then it also has unit orthogonal columns. Therefore, $\bfL$ is a real unit orthogonal matrix satisfying
\begin{equation}\label{L-2norm-0}
	\|\bfL\|_{2}=\|\bfL^{-1}\|_{2}=1,
\end{equation}
where $\bfL^{-1}$ is the inverse matrix of $\bfL$ and $\|\cdot\|_{2}$ denotes the spectral norm of a matrix. Further, it is obtained by the Cauchy-Schwarz inequality that 
\begin{equation}\label{L-2norm-1}
	\|\bfL\|_{2}=\sup\{\bfx^\top\bfL\bfy: x, y\in\R^{2K+1}
	\; \mathrm{with}	\; \|\bfx\|_{2}=\|\bfy\|_{2}=1\},
\end{equation}
where $\|\cdot\|_2$ also denotes the Euclidean norm for vectors in Euclidean spaces. 

Let $\bffR=(R_1,\dots,R_{2K+1})$, where $R_{m}=\bfl_{m}^{\top}\bfU$ is the Riemann invariant corresponding to the left eigenvector $\bfl_{m}$, $m=1,\dots,2K+1$. The facts \eqref{L-2norm-0} and \eqref{L-2norm-1} imply that
\begin{equation}\notag
	\|\bffR\|_{2}=\|\bfU\|_{2}.
\end{equation} 

Based on the above observations, we are in a position to show the following lemma. 
\begin{lem}\label{lem:g}
	Suppose that the conditions \eqref{assumption-num-integration}, \eqref{assumption-ini-compatibility} and \eqref{boundary-periodic} hold. Then, if the CFL condition \eqref{CFL} holds, then the numerical solution of the method \eqref{FV:flux-g} satisfies
	\begin{equation}\label{L2norm-Ug}
		\|\bfU^{g,n+1}\|_{L^2}\leq \|\bfU^{g,n}\|_{L^2}.
	\end{equation}
	where $\bfU^{g,n+1}$ is given by \eqref{notation-U-alpha}.
\end{lem}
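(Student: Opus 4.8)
The plan is to diagonalize the decoupled system and reduce the weighted $L^2$ bound to scalar estimates, one for each Riemann invariant. For each cell $i$ set $\bfU_i=\big(f^{g,n}_{-K,i}/\sqrt{\omega_{-K}},\dots,f^{g,n}_{K,i}/\sqrt{\omega_{K}}\big)^\top\in\R^{2K+1}$ and $R_{m,i}=\bfl_m^\top\bfU_i$. Since $\bfL$ is a real orthogonal matrix, $\sum_{m}|R_{m,i}|^2=\|\bfU_i\|_2^2$ for every $i$, whence
\begin{equation}\notag
	\|\bfU^{g,n}\|_{L^2}^2=\Delta c\,\Delta x\sum_{i=1}^{I}\|\bfU_i\|_2^2
	=\Delta c\sum_{m=1}^{2K+1}\|R^n_m\|_{l^2}^2,\qquad
	\|R^n_m\|_{l^2}^2:=\Delta x\sum_{i=1}^{I}|R_{m,i}^n|^2.
\end{equation}
So it suffices to prove $\|R^{n+1}_m\|_{l^2}\le\|R^n_m\|_{l^2}$ for each $m$ separately.

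Next I would project the scheme \eqref{scheme-g} onto the left eigenvectors. The interface reconstruction in \eqref{FV:flux-g1} acts identically on every velocity index, so it commutes with the constant matrix $\bfL$; applying $\bfl_m^\top$ therefore splits the update into $2K+1$ independent scalar schemes with a common $\erf$-weighted reconstruction, each driven by the eigenvalue attached to $\bfl_m$. For $m=2,\dots,2K+1$ one has $\bfl_m^\top\bfA=0$, so $R^{n+1}_{m,i}=R^n_{m,i}$: these modes are frozen and contribute equality. The only nontrivial mode is $R_1$, obeying
\begin{equation}\notag
	R^{n+1}_{1,i}=R^n_{1,i}-\lambda_1\frac{\Delta t}{\Delta x}\big(\hat R_{1,i+\half}-\hat R_{1,i-\half}\big),\quad
	\hat R_{1,i+\half}=\half\big(R^n_{1,i}+R^n_{1,i+1}\big)-\erf\big(\tfrac{a}{\sqrt{\theta}}\big)\big(R^n_{1,i+1}-R^n_{1,i}\big),
\end{equation}
with $\lambda_1=\sqrt{a^2+\half\theta}$, which by the Remark is exactly the GKS-type update for the first moment.

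The heart of the matter, and the step I expect to be the main obstacle, is the scalar estimate $\|R^{n+1}_1\|_{l^2}\le\|R^n_1\|_{l^2}$; this is precisely where the second entry of \eqref{CFL}, i.e.\ $\lambda_1\Delta t/\Delta x\le\erf(a/\sqrt{\theta})$, must be used. One route is to invoke directly the $l^2$ stability of the GKS flux established in \cite{TX2006}. For a self-contained argument, the $\erf$-weighted reconstruction is a central flux plus a numerical dissipation of strength $\lambda_1\erf(a/\sqrt{\theta})\Delta x$, and under the periodic boundary conditions a von Neumann computation yields the amplification factor
\begin{equation}\notag
	G(\xi)=1-\lambda_1\frac{\Delta t}{\Delta x}\Big[4\,\erf\big(\tfrac{a}{\sqrt{\theta}}\big)\sin^2\tfrac{\xi}{2}+\im\sin\xi\Big].
\end{equation}
The task then reduces to verifying $|G(\xi)|^2\le 1$ for all $\xi$; setting $s=\sin^2(\xi/2)$, this is a quadratic inequality in $s$ that becomes linear after dividing by $4s$, so it is enough to check the endpoints $s=0$ and $s=1$, the latter (the sawtooth mode $\xi=\pi$) giving the binding constraint. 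Balancing the dispersive central part against the dissipative $\erf$-weighted part under the CFL is the delicate point, and is what forces the specific constant $\sqrt{a^2+\half\theta}\big/\erf(a/\sqrt{\theta})$ to appear.

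Finally I would reassemble: summing the per-mode bounds over $m$ with weight $\Delta c$ and using the norm identity above gives $\|\bfU^{g,n+1}\|_{L^2}\le\|\bfU^{g,n}\|_{L^2}$, which is \eqref{L2norm-Ug}. Replacing the data $f^{g,n}_{k,i}$ by $f^{n}_{k,i}$ at the $n$-th step then upgrades this to the bound against $\|\bfU^n\|_{L^2}$ required by \eqref{condition-convex}.
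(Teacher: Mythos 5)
Your proposal follows the same skeleton as the paper's own proof: project the update onto the left eigenvectors of $\bfA$, note that the modes $m=2,\dots,2K+1$ are frozen since $\bfl_m^\top\bfA=0$, identify the single nontrivial Riemann invariant $R_1$ with the GKS-type update \eqref{scheme-GKS} for the first moment (with $\lambda_1=\sqrt{a^2+\half\theta}$), prove a scalar $l^2$ bound by von Neumann analysis, and transfer back through $\bfL$. Your use of the exact orthogonality identity $\sum_m|R_{m,i}|^2=\|\bfU_i\|_2^2$ instead of the paper's pair of inequalities from $\|\bfL\|_2=\|\bfL^{-1}\|_2=1$ is a cosmetic difference; the commutation of the $\erf$-weighted reconstruction with $\bfL$ and the frozen-mode argument are both correct.

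The gap is precisely at the step you yourself call the heart of the matter, which you assert but never verify. Your amplification factor $G(\xi)=1-\beta\big[4\,\erf(\alpha)\sin^2\tfrac{\xi}{2}+\im\sin\xi\big]$, with $\beta=\lambda_1\Delta t/\Delta x$ and $\alpha=a/\sqrt{\theta}$, is indeed the correct one for the interface value \eqref{FV:flux-g1} as written (dissipation coefficient $\erf\alpha$ multiplying the jump). But with this $G$ the endpoint check does \emph{not} close under \eqref{CFL}: writing $s=\sin^2(\xi/2)$ and $D=\erf\alpha$, a direct computation gives
\[
|G|^2-1=4s\big[\beta(\beta-2D)+(4D^2-1)\beta^2 s\big],
\]
so the endpoint $s=0$ demands $\beta\le 2D$ while $s=1$ demands $2\beta D\le 1$. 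The CFL bound $\beta\le D$ implies the former but \emph{not} the latter once $D>1/\sqrt{2}$, a regime that is attainable because the paper's unnormalized error function satisfies $\erf\alpha\to\sqrt{\pi}/2\approx 0.886$ (already $\erf(1)\approx 0.75>1/\sqrt{2}$). Concretely, the sawtooth mode $\xi=\pi$ is multiplied by $1-4\beta D$, whose modulus exceeds $1$ when $\beta=D>1/\sqrt{2}$. Hence it is not true that the $s=1$ endpoint ``gives the binding constraint,'' and neither endpoint produces the constant $\sqrt{a^2+\half\theta}\big/\erf(a/\sqrt{\theta})$ you claim is forced. The paper's proof survives because its appendix computes $|G|^2-1=4s\big[\beta(\beta-D)-\beta^2(1-D^2)s\big]$, which is the amplification factor of the half-dissipation flux $\half(F_i+F_{i+1})-\tfrac{D}{2}(F_{i+1}-F_i)$, i.e.\ the convention of \cite{TX2006}; for that flux $s=0$ is the binding endpoint and yields exactly $\beta\le D$, matching \eqref{CFL}. (The same caveat applies to your alternative route of citing \cite{TX2006} directly: it presupposes that flux normalization.) So as written your key inequality fails on part of the admissible parameter range; to repair the argument you must either adopt the half-dissipation normalization consistently in \eqref{FV:flux-g1}, or replace the CFL restriction by $\beta\le\min\{2D,\,1/(2D)\}$.
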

\begin{proof}
	The scheme \eqref{FV:flux-g} is rewritten as
	\begin{equation}\label{scheme-g-re}
		\frac{1}{\Delta t}\big(\bfU^{g,n+1}_{i}-\bfU^{g,n}_{i}\big)
		=\bfA\frac{1}{\Delta x}
		\big(\bfU^{g,n}_{i+\half}-\bfU^{g,n}_{i-\half}\big),
	\end{equation}
	where the matrix $\bfA$ is given in \eqref{def:A}, and
	\begin{equation}\notag
		\bfU^{g,n+1}_{i}
		=\bigg(\frac{f^{g,n+1}_{-K,i}}{\sqrt{\omega_{-K}}},\dots,
		\frac{f^{g,n+1}_{K,i}}{\sqrt{\omega_{K}}}\bigg)^\top,\quad
		\bfU^{g,n+1}_{i+\half}
		=\bigg(\frac{f^{g,n+1}_{-K,i+\half}}{\sqrt{\omega_{-K}}},\dots,
		\frac{f^{g,n+1}_{K,i+\half}}{\sqrt{\omega_{K}}}\bigg)^\top.
	\end{equation}
	
	Let the left eigenvector matrix $\bfL$ be the same as in \eqref{def:L}, and denote the corresponding Riemann invariants by 
	\begin{equation}\label{def:Riemann-invariant}
		R^{n}_{m,i}=\bfl_{m}^{\top}\bfU^{g,n}_{i}, \quad\quad m=1,\dots,2K+1,\quad i=1,\dots,I.
	\end{equation}
	To obtain \eqref{L2norm-Ug}, in what follows we first prove that
	\begin{equation}\label{estimate-R}
		\sum_{i=1}^{I}\sum_{m=1}^{2K+1}|R^{g,n+1}_{m,i}|^2	
		\leq \sum_{i=1}^{I}\sum_{m=1}^{2K+1}|R^{g,n}_{m,i}|^2.	
	\end{equation}	
	
	For $m=2,\dots, 2K+1$, due to the fact that $\bfl_m^\top\bfA=0$, multiply the equations \eqref{scheme-g-re} by $\bfl_m^\top$ from the left, leading to 
	\begin{equation}\label{estimate-R2}
		R^{n+1}_{m,i}=R^{n}_{m,i},\quad\quad m=2,\dots, 2K+1.
	\end{equation}
	
	For $m=1$, it follows from \eqref{def:l_1} that
	\begin{equation}\notag
		\sqrt{\Delta c}R^{n}_{1,i}=\sqrt{\Delta c}\bfl_1^\top\bfU^{g,n}_{i}=\sum_{k}\Delta c c_kf^{g,n}_{k,i}.
	\end{equation}
	Based on this, multiply the equations \eqref{scheme-g-re} by $\sqrt{\Delta c}\bfl_1^\top$ from the left, yielding
	\begin{equation}\label{scheme-GKS}
		F^{g,n+1}_{i}=F^{g,n}_{i}
		-\sqrt{a^2+\frac{1}{2}\theta}\frac{\Delta t}{\Delta x}
		\bigg(F^{g,n}_{i+\half}-F^{g,n}_{i-\half}\bigg),
	\end{equation}
	where
	\begin{equation}\label{flux:GKS}
		\begin{aligned}
			&F^{g,n+1}_{i}=\sum_{k}\Delta c c_kf_{k,i}^{g,n+1},\quad
			F^{g,n}_{i}=\sum_{k}\Delta c c_kf_{k,i}^{g,n},\\
			&F^{g,n}_{i+\half}=\frac{1}{2}(F^{g,n}_{i}+F^{g,n}_{i+1})
			-\erf(\frac{a}{\sqrt{\theta}})(F^{g,n}_{i+1}-F^{g,n}_{i}).
		\end{aligned}
	\end{equation}
	The method given by \eqref{scheme-GKS} is indeed a kinetic upwind finite volume scheme (c.f.\cite{TX2006}) applied to a scalar inviscid advection equation
	\begin{equation}\notag
		\pt_t F + \sqrt{a^2+\frac{1}{2}\theta} \pt_x F=0.
	\end{equation} 
	Thus, with periodic boundary conditions and based on the fact that $0\leq\erf(\frac{a}{\sqrt{\theta}})\leq1$, it can be proved using the standard von-Neumann stability analysis that (see \cite{TX2006} for the details of von-Neumann stability analysis on the kinetic upwind schemes for linear advection-diffusion equations) that
	\begin{equation}\label{estimate-R1-0}
		\sum_{i=1}^{I}\Delta x|F^{g,n+1}_{i}|^2\leq \sum_{i=1}^{I}\Delta x|F^{g,n}_{i}|^2,
	\end{equation} 
	provided that $	\sqrt{a^2+\frac{1}{2}\theta}\frac{\Delta t}{\Delta x}\leq \erf\big(\frac{a}{\sqrt{\theta}}\big)$. For the comprehensiveness of the paper, the details for the proof of the estimate \eqref{estimate-R1-0} are presented in \Cref{sec:ap}.
	
	Consequently, \eqref{estimate-R1-0} implies
	\begin{equation}\label{estimate-R1}
		\sum_{i=1}^{I}|R^{g,n+1}_{1,i}|^2\leq \sum_{i=1}^{I}|R^{g,n}_{1,i}|^2.
	\end{equation}
	
	Combining \eqref{estimate-R2} and \eqref{estimate-R1} gives \eqref{estimate-R}.
	
	Next, we are in a position to show 
	\begin{equation}\label{RUleft}
		\sum_{i=1}^{I}\|\bfU^{g,n+1}_i\|^2_{2} \leq  
		\sum_{i=1}^{I}\|\bffR^{n+1}_{i}\|^2_{2}.
	\end{equation} 
	where $\bffR^{n}_{i}=(R^{n}_{1,i},\dots,R^{n}_{2K+1,i})^\top$. 
	
	By the definition \eqref{def:Riemann-invariant}, it holds that $\bfU^{g,n+1}_{i}=\bfL^{-1}\bffR^{n+1}_{i}$, indicating
	\begin{equation}\notag
		\|\bfU^{g,n+1}_{i}\|_{2}\leq \|\bfL^{-1}\|_{2}\|\bffR^{n+1}_{i}\|_{2}, \quad\quad i=1,\dots,I.
	\end{equation}
	Therefore, the above inequality, together with the fact that $\|\bfL^{-1}\|_{2}=1$ (see \eqref{L-2norm-0}), gives \eqref{RUleft}.
	
	Similarly, noting that $\bffR^{n}_{i}=\bfL\bfU^{g,n}_{i}$ and $\|\bfL\|_{2}=1$, we have
	\begin{equation}\label{RUright}
		\sum_{i=1}^{I}\|\bffR^{n}_{i}\|^2_{2}\leq	\sum_{i=1}^{I}\|\bfU^{g,n}_i\|^2_{2}.
	\end{equation} 
	
	Finally, combining \eqref{RUleft}, \eqref{estimate-R} and \eqref{RUright} leads to
	\begin{equation}\notag
		\sum_{i=1}^{I}\|\bfU^{g,n+1}_i\|^2_{2} \leq  
		\sum_{i=1}^{I}\|\bffR^{n+1}_{i}\|^2_{2} \leq 
		\sum_{i=1}^{I}\|\bffR^{n}_{i}\|^2_{2}\leq	\sum_{i=1}^{I}\|\bfU^{g,n}_i\|^2_{2},
	\end{equation}
	which implies \eqref{L2norm-Ug}.
	
	The proof is completed.
\end{proof}

\subsection{Particle collisions inside each cell}	
In this subsection, we aim to show the estimate $\|\bfU^{s,n+1}\|_{L^2}\leq \|\bfU^{n}\|_{L^2}$, which is related to the physics process of particle collisions inside each cell. The corresponding sub-method reads
\begin{equation}\label{FV:source}
	\begin{aligned}
		&f^{s,n+1}_{k,i}=u^{n+1}_{i}\omega_k,\quad
		u^{n+1}_{i}=W(\dtdtau)u^{f,n+1}_{i}
		+(1-W(\dtdtau))u^{g,n+1}_{i}, \\
		&u^{g,n+1}_{i}=u^n_{i}
		-\frac{\Delta t}{\Delta x}\sum_{k}(c_kg^{n}_{k,i+\half}-c_kg^{n}_{k,i-\half}),\\
		&u^{f,n+1}_{i}=u^n_{i}
		-\frac{\Delta t}{\Delta x}\sum_{k}(c_kf^{n}_{k,i+\half}-c_kf^{n}_{k,i-\half}).
	\end{aligned}
\end{equation}

We have the following lemma. 
\begin{lem}\label{lem:s}
	Suppose that the conditions \eqref{assumption-num-integration}, \eqref{assumption-ini-compatibility} and \eqref{boundary-periodic} hold. Then, if the CFL condition \eqref{CFL} holds, then the numerical solution of the method \eqref{FV:source} satisfies
	\begin{equation}\label{L2norm-Us}
		\|\bfU^{s,n+1}\|_{L^2}\leq \|\bfU^{n}\|_{L^2}.
	\end{equation}
	where $\bfU^{s,n+1}$ is given by \eqref{notation-U-alpha}.
\end{lem}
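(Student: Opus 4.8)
The plan is to reduce the claim to the two sub-methods already controlled by \Cref{lem:f} and \Cref{lem:g}, exploiting that the source sub-method produces a distribution pointwise proportional to $\omega_k$. First I would evaluate the left-hand norm directly. Since $f^{s,n+1}_{k,i}=u^{n+1}_{i}\omega_k$, we have $f^{s,n+1}_{k,i}/\sqrt{\omega_k}=u^{n+1}_{i}\sqrt{\omega_k}$, so that $\big\|\bff^{s,n+1}_{k}/\sqrt{\omega_k}\big\|^2_{l^2}=\omega_k\|\bfu^{n+1}\|^2_{l^2}$. Summing over $k$ with weight $\Delta c$ and using the normalization $\sum_{k}\Delta c\omega_k=1$ from \eqref{assumption-num-integration} yields the clean identity $\|\bfU^{s,n+1}\|_{L^2}=\|\bfu^{n+1}\|_{l^2}$. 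Hence it suffices to bound the $l^2$-norm of the updated conservative variable.

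Next I would use that, by \eqref{FV:source}, $u^{n+1}_{i}=W(\dtdtau)u^{f,n+1}_{i}+(1-W(\dtdtau))u^{g,n+1}_{i}$ is a convex combination, since $0\le W(\dtdtau)\le1$. The triangle inequality for $\|\cdot\|_{l^2}$ then gives
\[
	\|\bfu^{n+1}\|_{l^2}\le W(\dtdtau)\|\bfu^{f,n+1}\|_{l^2}+(1-W(\dtdtau))\|\bfu^{g,n+1}\|_{l^2},
\]
so it remains to prove $\|\bfu^{f,n+1}\|_{l^2}\le\|\bfU^{n}\|_{L^2}$ and $\|\bfu^{g,n+1}\|_{l^2}\le\|\bfU^{n}\|_{L^2}$.

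The crucial observation is that each macroscopic update coincides with the velocity moment of the corresponding kinetic sub-method. Using the conservation constraint $u^{n}_{i}=\sum_{k}\Delta c f^{n}_{k,i}$ — valid at every time level by \eqref{assumption-ini-compatibility} together with the constraint-preserving property — and comparing \eqref{FV:convex-u} with the kinetic updates \eqref{FV:convex-f}, I would verify that $u^{f,n+1}_{i}=\sum_{k}\Delta c f^{f,n+1}_{k,i}$ and $u^{g,n+1}_{i}=\sum_{k}\Delta c f^{g,n+1}_{k,i}$; indeed, substituting $u^{n}_{i}=\sum_{k}\Delta c f^{n}_{k,i}$ into the flux differences reproduces exactly the cell-averaged distributions of the free-transport and interface-collision sub-methods. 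A Cauchy--Schwarz estimate identical to \eqref{ineq-temp-u}, again using $\sum_{k}\Delta c\omega_k=1$, then delivers $\|\bfu^{\alpha,n+1}\|_{l^2}\le\|\bfU^{\alpha,n+1}\|_{L^2}$ for $\alpha=f,g$.

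Finally, \Cref{lem:f} and \Cref{lem:g} supply $\|\bfU^{f,n+1}\|_{L^2}\le\|\bfU^{n}\|_{L^2}$ and $\|\bfU^{g,n+1}\|_{L^2}\le\|\bfU^{n}\|_{L^2}$ under the CFL condition \eqref{CFL}. Chaining these with the two previous displays closes the argument: both terms of the convex combination are bounded by $\|\bfU^{n}\|_{L^2}$, hence so is $\|\bfu^{n+1}\|_{l^2}=\|\bfU^{s,n+1}\|_{L^2}$. I expect the only genuinely delicate point to be the moment-matching step — correctly identifying the macroscopic flux differences with the velocity moments of the kinetic sub-methods, which hinges essentially on the conservation constraint holding at time level $n$. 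Everything else is a normalization identity plus convexity, and the substantive stability work (the $L^2$ control of the transport and interface-collision fluxes) has already been carried out in \Cref{lem:f} and \Cref{lem:g}.
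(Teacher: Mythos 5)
Your proof is correct and is essentially the paper's own argument with the linear steps commuted: the paper forms the kinetic-level convex combination $f^{pre,n+1}_{k,i}=W(\Delta t/\tau) f^{f,n+1}_{k,i}+(1-W(\Delta t/\tau))f^{g,n+1}_{k,i}$, identifies $u^{n+1}_{i}=\sum_{k}\Delta c\, f^{pre,n+1}_{k,i}$ via the constraint-preserving property, and applies the Cauchy--Schwarz/normalization contraction once, whereas you take velocity moments first and apply the same contraction to each sub-method — the ingredients (conservation constraint at level $n$, convexity in $W$, Cauchy--Schwarz with $\sum_{k}\Delta c\,\omega_k=1$, and \Cref{lem:f}, \Cref{lem:g}) are identical. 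Your observation that $\|\bfU^{s,n+1}\|_{L^2}=\|\bfu^{n+1}\|_{l^2}$ holds as an exact identity, rather than only the paper's inequality $\|\bfU^{s,n+1}\|_{L^2}\leq\|\bfU^{pre,n+1}\|_{L^2}$, is a minor sharpening but not a different route.
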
	

\begin{proof}
	Based on condition that the numerical integration is accurate in the sense of \eqref{assumption-num-integration}, the constraint-preserving property \eqref{constraint-preserving} and the equations \eqref{FV:convex}--\eqref{FV:convex-u} lead to
	\begin{equation}\label{scheme-s}
		\begin{aligned}
			&f^{s,n+1}_{k,i}=u^{n+1}_{i}\omega_k,\quad
			u^{n+1}_{i}=\sum_{k} \Delta c f^{pre,n+1}_{k,i},\\
			&f^{pre,n+1}_{k,i}=W(\dtdtau) f^{f,n+1}_{k,i}
			+(1-W(\dtdtau)) f^{g,n+1}_{k,i},
		\end{aligned}
	\end{equation}
	where $f^{pre,n+1}_{k,i}$ stands for the discrete distribution function predicted solely by considering the effect of numerical fluxes across cell interfaces.
	
	Adopt the notations
	\begin{equation}\notag
		\bfU^{pre,n+1}=\big(\frac{\bff^{pre,n+1}_{-K}}{\sqrt{\omega_{-K}}},\dots,
		\frac{\bff^{pre,n+1}_{K}}{\sqrt{\omega_{K}}}\big)^\top,
		\quad
		\bff^{pre,n+1}_{k}=(f^{pre,n+1}_{k,1},\dots,f^{pre,n+1}_{k,I}).
	\end{equation} 
	It follows from \Cref{lem:f} and \Cref{lem:g} that
	\begin{equation}\label{estimate:f-pre}
		\|\bfU^{pre,n+1}\|_{L^2}
		\leq W(\dtdtau)\|\bfU^{f,n+1}\|_{L^2}
		+(1-W(\dtdtau))\|\bfU^{g,n+1}\|_{L^2}
		\leq \|\bfU^{n}\|_{L^2}.
	\end{equation}
	
	It is obtained from \eqref{scheme-s} that
	\begin{equation}\notag
		\begin{aligned}
			\frac{f^{s,n+1}_{k,i}}{\sqrt{\omega_k}}
			=&\sum_{l}  \sqrt{\omega_k\Delta c}
			\sqrt{\omega_l\Delta c}
			\frac{f^{pre,n+1}_{l,i}}{\sqrt{\omega_l}}.
		\end{aligned}
	\end{equation}
	Based on the above equation and the condition $\sum_{k}\omega_k\Delta c=1$, the Cauchy-Schwartz inequality gives that
	\begin{equation}\notag
		\begin{aligned}
			\sum_{k}\big|\frac{f^{s,n+1}_{k,i}}{\sqrt{\omega_k}}\big|^2
			=&\big(\sum_{k}\omega_k\Delta c\big)
			\big|\sum_{l}\sqrt{\omega_l\Delta c}	\frac{f^{pre,n+1}_{l,i}}{\sqrt{\omega_l}}\big|^2\\
			=&\big|\sum_{l}\sqrt{\omega_l\Delta c}	\frac{f^{pre,n+1}_{l,i}}{\sqrt{\omega_l}}\big|^2\\
			\leq& \big(\sum_{l}\omega_l\Delta c\big)
			\sum_{l}\big|\frac{f^{pre,n+1}_{l,i}}{\sqrt{\omega_l}}\big|^2\\
			=&\sum_{l}\big|\frac{f^{pre,n+1}_{l,i}}{\sqrt{\omega_l}}\big|^2.
		\end{aligned}
	\end{equation}
	This implies
	\begin{equation}\label{estimate:f-s}
		\|\bfU^{s,n+1}\|_{L^2}\leq 	\|\bfU^{pre,n+1}\|_{L^2}.
	\end{equation}
	
	Combining \eqref{estimate:f-s} and \eqref{estimate:f-pre} leads to \eqref{L2norm-Us}.
	
	The proof is completed.
\end{proof}

\section{Conclusions}\label{sec:conclusion}	
Based on a direct modeling of physical laws in a control volume with limited cell resolution, the UGKS is a multiscale method for flow simulations in all flow regimes, in which the numerical flux is constructed using the integration solution of the kinetic model equation. This paper develop a first-order UGKS based on a linear kinetic model of BGK type. This model is able to reproduce the 1-D linear scalar advection-diffusion equation via the Chapman-Enskog expansion method. This paper rigorously proves the weighted $L^2$-stability of the first-order UGKS and derives a hyperbolic type CFL condition. The time step of the method neither suffers from restrictions of being less than the particle collision time, nor is it limited by parabolic type CFL conditions commonly used in the diffusion equations. 

For the proof, the creativity of this paper lies in that based on the ratio of the time step to the particle collision time, the update of distribution functions is viewed as a convex combination of sub-methods. Those sub-methods are in relation to various physics processes, such as the particle free transport and collisions, indicating the multiscale physics in the UGKS. The strong stability of the sub-methods are obtained by regarding them as finite volume discretizations to linear hyperbolic systems and using the associated Riemann invariants. Consequently, the strong stability preserving property gives the desired weighted $L^2$-stability of the first-order UGKS. 

Our approach provides a novel analytical framework to rigorously prove the strong stability of multiscale kinetic methods. The results serve as a first-step to analyze the stability of the UGKS for the full compressible gas dynamic.

\appendix
\section{Von Neumann stability analysis}\label{sec:ap}
In the appendix we present the details for applying the von Neumann stability analysis method to derive the estimates \eqref{von-Neumann-analysis-1} and \eqref{estimate-R1-0}.

We first consider the method \eqref{FV:flux-f}--\eqref{FV:flux-f1}. Without loss of generality, we present only the proof for the case with $c_k>0$. The results for other cases can be obtained in the same manner. For $c_k>0$, the method \eqref{FV:flux-f}--\eqref{FV:flux-f1} is rewritten as
\begin{equation}\label{ap:method-1}
	f^{f,n+1}_{k,i}=f^{f,n}_{k,i}
	-\frac{c_k\Delta t}{\Delta x}(f^{n}_{k,i}-f^{n}_{k,i-1}).
\end{equation}
In the following, we will investigate the stability properties in a $L^2$-setting following the stability analysis of von-Neumann for linear problems with periodic boundary conditions, see e.g. \cite{MM2005}. We investigate the effect of the method \eqref{ap:method-1} on harmonic waves with wave number $m$ and amplitude $\hat{f}^{f,n}_{k,m}$,
\begin{equation}
	f^{f,n}_{k,i}=\hat{f}^{f,n}_{k,m}e^{\im m (i\Delta x)},
\end{equation}
where $\im$ denotes the imaginary unit. Introducing the above Fourier expansion into the linear method \eqref{ap:method-1}, we obtain 
\begin{equation}
	\hat{f}^{f,n+1}_{k,m}=\lambda_k(m)\hat{f}^{f,n}_{k,m},
\end{equation}
where the amplification factor $\lambda_k(m)$ is given by
\begin{equation}
	\lambda_k(m)=1-\eta_k(1-e^{-\im m \Delta x}),\quad \eta_k=\frac{c_k\Delta t}{\Delta x}.
\end{equation}
It follows from a direct calculation that
\begin{equation}
	|\lambda_k(m)|^2=1-4\eta_k(1-\eta_k)\sin^2\big(\half m \Delta x\big)\leq 1.
\end{equation}
provided that $|\eta_k| \leq 1$. The above inequality implies the estimate \eqref{von-Neumann-analysis-1}.

Next, following the proof in \cite{TX2006}, we are in a position to show the estimate \eqref{estimate-R1-0}. Similarly, applying the Fourier expansion into the linear method \eqref{scheme-GKS}--\eqref{flux:GKS}, we obtain that
\begin{equation}
	\hat{F}^{g,n+1}_{m}=G(m)\hat{F}^{g,n}_{m},
\end{equation}
where $G(m)$ is the amplification factor. We skip the details of the calculation of the amplification factor and present the result in the form
\begin{equation}
	\begin{aligned}
		g(m):=&|G(m)|^2-1\\
		=&4\sin^2\frac{\xi}{2}
		\bigg(\beta(\beta-\erf\alpha)
		-\beta^2(1-(\erf\alpha)^2)\sin^2\frac{\xi}{2}\bigg),
	\end{aligned}
\end{equation}
where
\begin{equation}
	\xi=  m\Delta x,\quad	\beta=\sqrt{a^2+\frac{\theta}{2}}\frac{\Delta t}{\Delta x},\quad
	\alpha=\frac{a}{\sqrt{\theta}}.
\end{equation}
It suffices to prove 
\begin{equation}
	\max_{\xi\in[0,\pi]}g(\xi)\leq 0,
\end{equation}
which leads to
\begin{equation}
	\beta(\beta-\erf\alpha)
	-\beta^2(1-(\erf\alpha)^2)\sin^2\frac{\xi}{2}\leq 0.
\end{equation}
Since the above expression is linear in $\sin^2\frac{\xi}{2}$, we 
evaluate it at two positions $\xi=0$ and $\xi=\pi$ to obtain two necessary and sufficient conditions
\begin{equation}\label{ap:conditions}
	\beta(\beta-\erf\alpha)\leq 0\quad\quad\text{and}\quad\quad 
	\beta\erf\alpha(-1+\beta\erf\alpha)\leq 0.
\end{equation}
Based on the assumption $a>0$, we have $\beta>0$ and $\erf\alpha>0$. Thus, the conditions \eqref{ap:conditions} are reduced to 
\begin{equation}
	\beta \leq \frac{1}{\erf\alpha}\quad\quad\text{and}\quad\quad 	\beta \leq \erf\alpha.
\end{equation}  
Note that $\erf\alpha\leq 1$, hence the second condition is more restrictive and gives the estimate \eqref{estimate-R1-0}.


\vspace{2mm}
\noindent {\bf Acknowledgments:}

This current work was supported by National Key R$\&$D Program of China (Grant Nos. 2022YFA1004500),
National Natural Science Foundation of China (12172316, 92371107),
and Hong Kong research grant council (16301222, 16208324). We would like to thank Wei Liu for helpful discussions.
 
\vspace{2mm}


\end{document}